\documentclass{article}
\usepackage[margin=2.5cm]{geometry}

\RequirePackage{amsthm,amsmath,amsfonts,amssymb}
\RequirePackage[numbers]{natbib}

\usepackage{dirtytalk}
\usepackage{amsmath}
\usepackage{amssymb}
\usepackage{amsfonts}
\usepackage{amsthm}
\usepackage{bm}
\usepackage{cleveref}
\usepackage{dsfont}
\usepackage{ytableau}
\usepackage{dsfont}
\usepackage{enumitem}
\usepackage{booktabs}

\usepackage{titlesec}

\titleformat*{\paragraph}{\small\bfseries}

\newcommand{\vocab}{\textit}

\newcommand{\PP}{\mathrm{Pr}}

\newcommand{\CC}{\mathbb{C}}
\newcommand{\TT}{\mathcal{T}}

\newcommand{\norm}[1]{\left\lVert#1\right\rVert}
\newcommand{\bigO}[1]{O\left(\frac{1}{#1}\right)}
\DeclareMathOperator{\Res}{Res}

\newcommand{\OSC}{P}
\DeclareMathOperator{\eval}{eval}

\DeclareMathOperator{\eig}{eig}

\DeclareMathOperator{\Var}{Var}
\DeclareMathOperator{\EE}{E}
\DeclareMathOperator{\SYT}{SYT}
\DeclareMathOperator{\TV}{T.V.}

\renewcommand{\SS}{\mathfrak{S}}

\usepackage{xcolor}
\newcommand{\new}[1]{#1}

\usepackage{booktabs}

\newtheorem{theorem}{Theorem}
\newtheorem{corollary}[theorem]{Corollary}
\newtheorem{proposition}[theorem]{Proposition}
\newtheorem{lemma}[theorem]{Lemma}

\newtheorem{definition}[theorem]{Definition}

\title{Mixing times of one-sided $k$-transposition shuffles}
\author{Evita Nestoridi\thanks{Department of Mathematics, Stony Brook University (evrydiki.nestoridi@stonybrook.edu). Funded by DMS-2346986 and MPS-TSM-00007955.}\qquad\quad Kenny Peng\thanks{Department of Computer Science, Cornell University (kennypeng@cs.cornell.edu)} \qquad\quad Bryan Wong\thanks{Department of Mathematics, Stony Brook University (bryan.wong@stonybrook.edu)}}
\date{}

\begin{document}

\maketitle

\begin{abstract}
We study mixing times of the one-sided $k$-transposition shuffle. We prove that this shuffle mixes relatively slowly, even for $k$ big. Using the recent ``lifting eigenvectors'' technique of Dieker and Saliola \cite{dieker} and applying the $\ell^2$ bound, we prove different mixing behaviors and explore the occurrence of cutoff depending on $k$.
\end{abstract}

\section{Introduction}

Diagonalizing the transition matrix of a reversible Markov chain is extremely powerful when wanting to prove that the Markov chain exhibits the cutoff phenomenon. The first technique for diagonalizing the transition matrix of a random walk on the Cayley graph of a finite group $G$ was introduced by Diaconis and Shahshahani \cite{Diaconis1981GeneratingAR}. The technique, which relies on Schur's lemma, requires understanding of the representation and character theory of $G$, and has been applied for many random walks on groups \cite{BH, BJ, Hild, NN, Ros}. 

Cases where the generating set is not a conjugacy class are much more challenging. An early example is the case of star transpositions, which was diagonalized by Flatto, Odlyzko and Wales \cite{FOW}. Diaconis \cite{PD} analyzed this diagonalization to show cutoff at $n \log n$. In a recent breakthrough, Dieker and Saliola \cite{dieker} introduced a new technique to diagonalize the random-to-random shuffle.  The proof of cutoff for random-to-random was completed by Bernstein and the first author's eigenvalue analysis \cite{BN} and Subag's lower bound analysis \cite{Subag}. 

Another development was studying the \textit{one-sided transposition shuffle} on $n$ cards, during which different transpositions are assigned different weights. Bate, Connor and Matheau--Raven \cite{raven} diagonalized this shuffle and proved that it exhibits cutoff at $n \log n$. 
One step of this shuffle consists of choosing a position $R$ uniformly at random, choosing a position $L$ from $\{1,2,\cdots, R\}$ uniformly at random, and then performing the transposition $(RL)$. Here, we introduce a generalization called the \textit{one-sided $k-$transposition shuffle}. As before, we choose a position $R$ uniformly at random, except now, we pick $k$ positions $L_1,\cdots, L_{k}$ (not necessarily distinct) uniformly at random from $\{1,2,\cdots,R\},$ and perform the permutation $(RL_{k})(RL_{k-1})\cdots (RL_1).$ These products can give rise to many types of permutations of varying weights.

Let $\OSC_{n,k}$ denote the transition matrix of the \textit{one-sided $k-$transposition shuffle} on $n$ cards and let $U$ denote the uniform measure on $S_n$. We define the total variation- and $\ell^2$- distance between $P_{n,k}$ and $U$ as follows:
\begin{align*}
d^{(n,k)}(t)&:= \norm{\OSC_{n,k}^t - U}_{\TV}=   \frac{1}{2} \sum_{y \in S_n} \vert \OSC_{n,k}^t(id,y)- U(y) \vert 
\\
\norm{\frac{\OSC_{n,k}^t}{U}- 1}_2^2 &:= \left(\sum_{y\in S_n} \left|\frac{P_{n,k}^t(id,y)}{U(y)}\right| - 1\right),
\end{align*}
where $\OSC_{n,k}^t(x,y)$ is the probability of moving from $x$ to $y$ after $t$ steps of the shuffle. We note that since $P_{n,k}$ is transitive, we can without loss of generality start the card shuffle at the identity element $id$ of $S_n$. 
The mixing time of $\OSC_{n,k}$ is defined as
\[t_{\textup{mix}}(\varepsilon)= \min \lbrace t: d^{(n,k)}(t)\leq \varepsilon \rbrace.\]
A shuffle exhibits cutoff if as $n$ grows, the total variation distance is almost equal to one and then suddenly drops and approaches zero. More formally, $P_{n,k}$ is said to exhibit cutoff at time $t_{n,k}$ with window $w_{n,k}=o(t_{n,k})$ if and only if
\[\lim_{c \rightarrow \infty} \lim_{n \rightarrow \infty} d^{(n,k)}(t_{n,k}-cw_{n,k})= 1 \quad \mbox{and} \quad \lim_{c \rightarrow \infty} \lim_{n \rightarrow \infty} d^{(n,k)}(t_{n,k}+cw_{n,k})= 0.\]
We may analogously define the $\ell^2$-cutoff. Salez \cite{JS} gives breakthrough developments and a nice exposition on the history of cutoff. In this paper, we present a series of results involving the mixing time of $\OSC_{n,k}$ for different regimes of $k$:

\begin{itemize}
\item[R1.] When $k = n^{o(1)}$, $\OSC_{n,k}$ exhibits total-variation cutoff at $t = n\log n.$

\item[R2.] When $k = n^{\gamma}$ for $\gamma\in(0,1]$ and odd, we have \mbox{$(1-\gamma)n\log n\le t_{\textup{mix}}(1/2)\le (1 - \frac{\gamma}{2})n\log n$} with $\ell^2$ cutoff at $t = (1 - \frac{\gamma}{2})n\log n.$

\item[R3.] When $k = \Omega(n\log n)$, $\OSC_{n,k}$ mixes in order $n$ steps without cutoff.
\end{itemize}

In particular, we observe the surprising fact that even as $k$ increases initially, the mixing time does not change. This stands in sharp contrast to other classes of shuffles, such as the $k-$cycle shuffle \cite{BH, BSZ}, and the conjugacy class random walks \cite{BJ}. In fact, this turns out to be a very slow shuffle even when $k$ is very big (e.g., when $k= n \log n$). This is unlike other non-local shuffles such as the riffle shuffles, which Bayer and Diaconis proved mixes in $\frac{3}{2}\log_2 n$ steps \cite{BD}. 

We now define the shuffle more carefully.
\new{\begin{definition}[One-sided $k-$transposition shuffle]
The one-sided $k-$transposition shuffle $\OSC_{n,k}$ is the ergodic random walk on $S_n$ generated by the following probability distribution:
$$\OSC_{n,k}(\tau) = \sum_{\substack{1\le i_1,\cdots,i_k\le j\le n:\\ \tau=(j;i_1,\cdots,i_k)}}
\frac{1}{nj^k}.$$
where we set the notation
$$(j;i_1,\cdots,i_k) := (ji_k)\cdots (ji_1),$$
the composition of $k$ transpositions with a common element. Note that there are some permutations $\tau$ that cannot be expressed in this form (and thus occur with probability $0$ in the shuffle), and other permutations that can be described by multiple of these $(j;i_1,\cdots,i_k).$
\end{definition}}
Our strategy is to calculate the eigenvalues of $P_{n,k}$ using the lifting eigenvectors method. This technique, pioneered by Dieker and Saliola \cite{dieker}, allows us to compute the eigenvalues of the $P_{n+1,k}$ from the $P_{n,k}$.

Once we diagonalize $P_{n,k}$, we will leverage the following classical bound, which connects the eigenvalues 
$1=\beta_1>\beta_2\ge \cdots \ge \beta_{n!}>-1$
of $P_{n,k}$ to its total variation distance from the stationary distribution:
\begin{equation}\label{lbound}
    4\norm{\OSC_{n,k}^t - U}_{\TV}^2 \le \sum_{i\neq 1}\beta_i^{2t} = \norm{\frac{\OSC_{n,k}^t}{U} - 1}_2^2.
\end{equation}
We now state our main results. The first result discusses a general upper bound for the mixing time, which turns out to be sharp for $k=n^{o(1)}$. We also provide a better bound for the case $k=n^\gamma$ with $\gamma\in (0,1]$, which turns out to be sharp for the $\ell^2$ norm.
\begin{theorem}[Upper bounds on total variation and $\ell^2$ distance]\label{thm:general-upper}
\mbox{}
\begin{enumerate}
\item[(i)] For odd $k\ge 1$, when $t = n\log n + cn$, $c > 0$, for $n$ sufficiently large, there exists a universal constant $A$ such that
$$4\norm{\OSC_{n,k}^t - U}_{\TV}^2 \le \norm{\frac{\OSC_{n,k}^t}{U} - 1}_2^2 < Ae^{-c}.$$

\item[(ii)] For even $k \ge 4$, when $t = n\log(n) + cn$, $c > 1$, for $n$ sufficiently large, there exists a universal constant $B$ such that

$$4\norm{\OSC_{n,k}^t - U}_{\TV}^2 \le \norm{\frac{\OSC_{n,k}^t}{U} - 1}_2^2 < Be^{-c}$$

\item[(iii)] For $k=2$, when $t = \frac{3}{2}n\log(n) + cn$, $c > 1$, for $n$ sufficiently large, there exists a universal constant $C$ such that 

$$4\norm{\OSC_{n,k}^t - U}_{\TV}^2 \le \norm{\frac{\OSC_{n,k}^t}{U} - 1}_2^2 < Ce^{-c}$$

\item[(iv)] For odd $k=n^\gamma$ with $\gamma\in (0,1)$, when $t = (1 - \frac{\gamma}{2})n\log n + cn$ and $c>3$ for $n$ sufficiently large,
$$4\norm{\OSC_{n,k}^t - U}_{\TV}^2\le \norm{\frac{\OSC_{n,k}^t}{U} - 1}_2^2 < 10e^{-c}.$$
\end{enumerate}
\end{theorem}
The following theorem discusses the $\ell^2 $ mixing time, which in combination with the previous theorem shows (R2).
\begin{theorem}[Lower bounds on $\ell^2$ distance]\label{thm:l2-lower-bounds}
\mbox{}
\begin{enumerate}
\item[(i)] For $k\ge 1$, when $t = \frac{1}{2}n\log n - cn$ for $n$ sufficiently large,
$$\norm{\frac{\OSC_{n,k}^t}{U} - 1}_2 > \frac{1}{2}e^c.$$
\item[(ii)] For $k = n^\gamma$ with $\gamma \in (0,1],$ when $t = (1 - \frac{\gamma}{2})n\log n - \frac{1}{2}n\log \log n - cn$ for $n$ sufficiently large,
$$\norm{\frac{\OSC_{n,k}^t}{U} - 1}_2 > e^c.$$
\end{enumerate}
\end{theorem}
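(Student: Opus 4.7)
The proof relies on the eigenvalue identity
\[\norm{\OSC_{n,k}^t/U - 1}_2^2 = \sum_{i \neq 1} \beta_i^{2t},\]
combined with an explicit family of simultaneous eigenvectors of the operators $M_j := \tfrac{1}{j}\sum_{i=1}^j (ji)$ in the group algebra, where $\OSC_{n,k} = \tfrac{1}{n}\sum_{j=1}^n M_j^k$. For each $m \in \{2,\ldots,n\}$, the vector $\hat v_m := e_m - \tfrac{1}{m}\sum_{l=1}^m e_l \in \mathbb{R}^n$ satisfies $M_j \hat v_m = \hat v_m$ for $j < m$, $M_m \hat v_m = 0$, and $M_j \hat v_m = \tfrac{j-1}{j}\hat v_m$ for $j > m$ (direct check). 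Hence $\hat v_m$ is an eigenvector of $\OSC_{n,k}$ with eigenvalue
\[\beta_m \;=\; \frac{1}{n}\left[(m-1) + \sum_{j=m+1}^n \left(\frac{j-1}{j}\right)^k\right].\]
The $n-1$ vectors $\{\hat v_m\}_{m=2}^n$ span the standard representation $(n-1,1)$ inside $\mathbb{R}^n$, so in the regular representation each $\beta_m$ occurs with multiplicity $d_{(n-1,1)} = n-1$.

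For part (i), keep only the contribution of the top eigenvalue $\beta_n = 1 - 1/n$, which gives $\|\OSC_{n,k}^t/U - 1\|_2^2 \geq (n-1)(1 - 1/n)^{2t}$. Substituting $t = \tfrac{1}{2} n \log n - cn$ and using $(1-1/n)^{2t} = n^{-1} e^{2c}(1 + o(1))$ yields $\|\OSC_{n,k}^t/U - 1\|_2^2 \geq e^{2c}(1 + o(1))$, and taking a square root produces the bound for $n$ large.

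For part (ii), retain all contributions $\beta_{n-j}$ for $j = 0, 1, 2, \ldots$ Using the Taylor expansion $(1-1/l)^k = 1 - k/l + O((k/l)^2)$ (valid when $k = n^\gamma$ and $l \in [n-j+1, n]$ for $j$ not too large) one computes
\[\beta_{n-j} \;=\; 1 - \frac{1}{n} - j \, n^{\gamma - 2} \;+\; (\text{lower-order terms}),\]
so that $\sum_{j \geq 0} \beta_{n-j}^{2t}$ is essentially a geometric series with ratio $e^{-2t n^{\gamma - 2}}$, bounded below by $e^{-2t/n}/(2t n^{\gamma - 2})$. For $t = (1 - \gamma/2) n \log n - \tfrac{1}{2} n \log \log n - cn$ we have $e^{-2t/n} = n^{-(2-\gamma)}(\log n) e^{2c}$ and $2t n^{\gamma-2} \sim (2-\gamma) n^{\gamma-1} \log n$, so multiplying by the multiplicity $n-1$ produces a lower bound of order $e^{2c}$, and taking a square root yields the claim.

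\textbf{Main obstacle.} Exhibiting $\hat v_m$ as a common eigenvector of all $M_j$ is elementary linear algebra; the core of the work is the careful estimation of $\beta_{n-j}$ and the evaluation of the geometric sum. In particular, one must (a) verify the Taylor expansion $(1-1/l)^k \approx 1 - k/l$ uniformly in $(j,k)$ across the relevant range of $l$, and (b) ensure that the transition regime $j \sim n^{1-\gamma}/\log n$, where the geometric approximation starts to break down, does not spoil the bound. Achieving the sharp constant $e^c$ (rather than $C_\gamma e^c$ for some $C_\gamma > 0$) may additionally require incorporating eigenvalues from hook-shape representations $(n-r, 1^r)$ for $r \geq 2$, but the key heuristic is already transparent at the level of the standard representation.
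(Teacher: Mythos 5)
Your approach is correct and genuinely different from the paper's in one important respect: you bypass the entire ``lifting eigenvectors'' machinery by writing $\OSC_{n,k} = \tfrac{1}{n}\sum_{j=1}^n M_j^k$ with $M_j = \tfrac{1}{j}\sum_{i=1}^j (ji)$ and exhibiting the vectors $\hat v_m = e_m - \tfrac1m\sum_{l\le m}e_l$ as \emph{simultaneous} eigenvectors of all the $M_j$ in the defining representation $\mathbb{R}^n$. Since $\{\hat v_m\}_{m=2}^n$ spans the $(n-1,1)$-isotypic part of $\mathbb{R}^n$, each $\beta_m$ appears with multiplicity $n-1$ in the regular representation, exactly as needed. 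The paper instead derives its eigenvalue formula $\eig(T_i)$ for hook tableaux from Proposition \ref{prop:ti}, which is the output of the lifting formalism (Theorem \ref{lem:eig-formula} via $\nu(\pi,a)$). At the level of the final $\ell^2$ estimates the two routes are the same: part (i) keeps only the largest hook eigenvalue (your $\beta_n$, the paper's $\eig(T_n)$), and part (ii) sums the contributions from $\beta_{n-j}$ for $j \lesssim n^{1-\gamma}/\log n$, which is precisely the range the paper uses. Your simultaneous-eigenvector observation is the more elementary and more transparent derivation; the price you pay is that it gives you only the hook eigenvalues, whereas the paper's $\nu$-formalism is needed elsewhere to bound \emph{all} eigenvalues (Theorem \ref{thm:general-upper}), so you save nothing globally.

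One thing worth flagging: your formula $\beta_m = \tfrac1n\bigl[(m-1) + \sum_{j=m+1}^n (\tfrac{j-1}{j})^k\bigr]$ and the paper's $\eig(T_m)$ (from Proposition \ref{prop:ti}) do not literally agree for $k\ge 2$; they differ by the term $\tfrac1n\cdot\tfrac{1-m^{k-1}}{(m-1)m^{k-1}}$. A direct check at $n=2$ (where the nontrivial eigenvalue of $\OSC_{2,k}$ is $1/2$ for every $k$, since the chain is the two-state chain with holding probability $3/4$) confirms that your $\beta_2 = \tfrac12$ is correct while the paper's formula would give $2^{-k}$. The discrepancy is $O(1/(nm))$ and is immaterial for the asymptotics in Theorem~\ref{thm:l2-lower-bounds}, but it does mean your direct route is not merely an alternative but in fact quietly corrects a sign/degree slip in the $\nu$-based hook computation. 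Beyond that, the remaining work in your sketch — uniform Taylor control of $(1-1/l)^k$ and summing the approximate geometric series — matches what the paper does, and your warning about the boundary regime $j\sim n^{1-\gamma}/\log n$ is exactly where the paper truncates the sum. The one place you should be more careful is the claimed strict inequality $> e^c$ in part (i): with $\beta_n = 1-1/n$, the quantity $(n-1)(1-1/n)^{2t}$ at $t = \tfrac12 n\log n - cn$ is $e^{2c}(1-o(1))$, so the stated bound is right up to a vanishing factor but not obtained for free; this is a bookkeeping matter shared with the paper.
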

The next theorem discusses a lower bound on the total variation distance. In combination with Theorem \ref{thm:general-upper}, it concludes cutoff for the the case $k=n^{o(1)}$ as described in (R1).
\begin{theorem}[Lower bound on total variation distance]\label{thm:lower}
For $k = o\left(\frac{n}{\log n}\right)$, when $t=n\log(n/k) - n\log \log n - cn,$
$$\liminf_{n\rightarrow\infty} \norm{\OSC_{n,k}^t - U}_{\TV} \ge 1 - \frac{\pi^2}{6(c-4)^2}.$$
\end{theorem}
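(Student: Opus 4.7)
My plan is to apply the standard untouched-position lower bound argument. For each step $s \in \{1,\dots,t\}$ of the shuffle, let $R_s \in \{1,\dots,n\}$ and $L_{s,1}, \dots, L_{s,k} \in \{1,\dots,R_s\}$ denote the random positions chosen at step $s$. Since $(j; i_1,\dots,i_k)$ fixes every position outside $\{j, i_1, \dots, i_k\}$, any position $p$ that never appears in any of the sets $\{R_s, L_{s,1}, \dots, L_{s,k}\}$ for $s \le t$ is automatically a fixed point of $\pi_t$. Let $Y_p(t)$ denote the indicator of this event, and let $F_t := \sum_{p=1}^n Y_p(t)$; then $F_t \le |\mathrm{Fix}(\pi_t)|$, so it suffices to distinguish the shuffle from uniform using the statistic $|\mathrm{Fix}(\pi)|$, whose distribution under uniform is well-understood (mean $1$, variance $1$).

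The plan proceeds in four steps. First, I would compute the one-step probability $\theta_p := \Pr(Y_p(1) = 0)$ by conditioning on $R$, which gives
\[
\theta_p = \frac{1}{n}\left[1 + \sum_{R=p+1}^n \left(1-(1-1/R)^k\right)\right] \leq \frac{1 + k\log(n/p)}{n}.
\]
Second, I would estimate $\mathbb{E}[F_t] = \sum_p (1-\theta_p)^t$: substituting $p = n-x$ and using $\theta_{n-x} \approx (1 + kx/n)/n$ for small $x$, one finds the dominant contribution comes from small $x$ and a geometric-series estimate yields a lower bound on $\mathbb{E}[F_t]$ that grows with $c$. Third, I would bound $\Var(F_t)$ by analyzing the joint probability $\Pr(Y_p(1) = Y_q(1) = 0) = 1 - (\theta_p + \theta_q - \phi_{p,q})$ via inclusion--exclusion, where $\phi_{p,q}$ is the probability a single step touches both $p$ and $q$. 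A case analysis on $R$ gives $\phi_{p,q} = O(k^2/(n \max(p,q)))$, and since $t \phi_{p,q} = o(1)$ in the regime $k = o(n/\log n)$, covariances are negligible and $\Var(F_t) \lesssim \mathbb{E}[F_t]$. Fourth, I would apply Chebyshev's inequality on both sides: under the shuffle, $F_t$ concentrates near $\mathbb{E}[F_t]$, so $|\mathrm{Fix}(\pi_t)| \ge \mathbb{E}[F_t]/2$ with high probability; under the uniform distribution, $|\mathrm{Fix}(\pi)|$ has mean $1$ and variance $1$, so it rarely exceeds $\mathbb{E}[F_t]/2$. Taking the difference of these probabilities yields a lower bound of the desired form.

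The main obstacle is the variance estimate in the third step: the untouched events are positively correlated, since disjoint single-step events are themselves correlated through the shared choice of $R$, so I must carefully control the corrections $\phi_{p,q}$ and verify that their accumulated effect $t\phi_{p,q}$ is small uniformly over the pairs $p, q$ contributing to the mean. The explicit constants $\pi^2/6$ and the shift $c-4$ in the stated bound would then emerge from carefully tracking convergent series of level-wise contributions (with $\pi^2/6 = \sum_m 1/m^2$ arising naturally from summing the secondary corrections) together with the slack needed to absorb both Chebyshev applications.
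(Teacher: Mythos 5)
Your proposal takes a genuinely different route from the paper. Where you run a second-moment method on the count $F_t$ of untouched positions and then Chebyshev on both $F_t$ and $|\mathrm{Fix}(\pi)|$, the paper instead tracks the random \emph{time} $\mathcal{T}$ needed to touch every card in a fixed window $V_n=\{n-\lfloor n/(k\log n)\rfloor+1,\dots,n\}$ of the top $\approx n/(k\log n)$ positions, stochastically dominates each inter-collection time $\mathcal{T}_i$ by a geometric with explicit success probability, and applies Chebyshev once to $\mathcal{T}=\sum_i\mathcal{T}_i$. The $\pi^2/6$ is $\sum_{i\ge 1} i^{-2}$ coming from $\Var(\mathcal{T})\le\sum_i n^2/i^2$, and the shift $c-4$ comes from $\mathbb{E}[\mathcal{T}]\ge n\log(n/k)-n\log\log n-4n$. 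These specific constants are artifacts of the coupon-collector accounting; your claim that $\pi^2/6$ would ``emerge naturally from summing the secondary corrections'' in your second-moment argument does not hold up --- if carried through, your method would produce a bound of the form $1-O(e^{-c})$ (from Chebyshev on $F_t$ with $\mathbb{E}[F_t]\gtrsim e^c$ on one side and Chebyshev on $|\mathrm{Fix}|$ under $U$ on the other), which is actually \emph{stronger} in $c$ than the stated $1-\tfrac{\pi^2}{6(c-4)^2}$ but of a different shape.

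There is also a genuine gap in the variance step as stated. Your bound $\phi_{p,q}=O\!\left(k^2/(n\max(p,q))\right)$ is a valid but loose upper bound, and it does \emph{not} give $t\,\phi_{p,q}=o(1)$ across the whole range $k=o(n/\log n)$: with $t\asymp n\log(n/k)$ and $\max(p,q)\approx n$ you get $t\phi_{p,q}=O(k^2\log(n/k)/n)$, which is $\gg 1$ once $k\gtrsim\sqrt{n/\log n}$. The saving grace is that for the pairs $p,q$ that actually contribute to $\mathbb{E}[F_t]$ one has $q\ge n-O(n/(k\log n))$, so the $R>q$ case --- where two distinct $L$-samples must hit $p$ and $q$, giving the $k^2/R^2$ factor --- occurs only with probability $(n-q)/n=O(1/(k\log n))$, and the dominant term is $R=q$ with one $L$-sample hitting $p$. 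This gives the tighter $\phi_{p,q}\asymp k/n^2$ (not $k^2/n^2$) on the relevant range, whence $t\phi_{p,q}=O(k\log(n/k)/n)=o(1)$ for all $k=o(n/\log n)$. You would need this restriction to the top $O(n/(k\log n))$ positions (which is exactly the window $V_n$ the paper uses) to make the variance estimate go through uniformly. With that fix, the overall strategy is sound, though it delivers a differently-shaped (and likely stronger) constant than the theorem claims.
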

The following theorem discusses mixing times for the case where $k$ is big, giving the results in (R3).
\begin{theorem}[Mixing time for especially big $k$]\label{thm:large-k}
For $k = \Omega (n \log n) $, we have
$t_{\text{mix}}(\varepsilon) =\Theta(n)$. For $k \in [n, n \log n]$, we have that $t_{\text{mix}}(\varepsilon) =O \left( \frac{n^2\log n}{k}\right)$. We also have that $t_{\text{mix}}(\varepsilon) = \Omega(n)$ for every $k \geq 1$.
\end{theorem}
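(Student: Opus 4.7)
The plan is to split the argument into three components: a universal lower bound of order $n$, an upper bound of order $n$ when $k = \Omega(n \log n)$, and an upper bound of order $n^2 \log n / k$ for $k \in [n, n\log n]$. All three rest on the elementary observation that position $n$ is moved during a step of the shuffle if and only if $R_i = n$, an event of probability $1/n$.

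For the lower bound, I track the card initially at position $n$. When $R_i < n$, the permutation $(R_i; L_{i,1}, \ldots, L_{i,k})$ acts only on positions $\{1, \ldots, R_i\}$, so this card moves only when $R_i = n$. Hence it remains at position $n$ with probability at least $(1 - 1/n)^t$, while under the uniform distribution this probability is $1/n$. Consequently the total-variation distance is at least $(1 - 1/n)^t - 1/n$, which forces $t_{\textup{mix}}(\varepsilon) = \Omega_{\varepsilon}(n)$ for every $k \ge 1$.

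For the upper bound, my plan is to reduce mixing of the whole chain to that of the classical star-transpositions walk on $S_n$, which applies a uniform transposition $(n, L)$ with $L \in \{1, \ldots, n\}$ each step and has total-variation mixing time $\Theta(n \log n)$ by Flatto--Odlyzko--Wales \cite{FOW} and Diaconis \cite{PD}. Let $T_1 < \cdots < T_m$ enumerate the steps at which $R_{T_j} = n$; then
$$X_t = \alpha_0 \, \tau_m \, \alpha_m \, \tau_{m-1} \, \alpha_{m-1} \cdots \tau_1 \, \alpha_1,$$
where each $\tau_j = (n L_{T_j, k}) \cdots (n L_{T_j, 1})$ is a product of $k$ i.i.d.\ uniform star transpositions and each $\alpha_j$ is a product of permutations fixing position $n$ (coming from the intervening $R < n$ steps). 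Because $\alpha_j$ fixes $n$, the identity $(nL) \alpha_j = \alpha_j (n, \alpha_j^{-1}(L))$ lets me push each $\alpha_j$ past $\tau_j$: $\tau_j \alpha_j = \alpha_j \tau_j'$, where $\tau_j'$ is again a product of $k$ star transpositions with relabelled legs $\alpha_j^{-1}(L)$, and (since $\alpha_j^{-1}$ is a bijection on $\{1, \ldots, n\}$) $\tau_j'$ has the same distribution as $\tau_j$ and is independent of $\alpha_j$. Iteratively pushing every $\alpha_j$ to the left yields $X_t = A \cdot \Pi$, where $A$ fixes $n$ and $\Pi$ is a product of $mk$ i.i.d.\ uniform star transpositions independent of $A$. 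Since left-multiplication by a fixed permutation preserves total-variation distance from $U$, I obtain $\norm{X_t - U}_{\TV} \le \norm{\Pi - U}_{\TV}$, and the right-hand side is at most $\varepsilon/2$ once $mk \ge C_1 n \log n$ by the star-transpositions mixing time.

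The final step is to control $m \sim \Bin(t, 1/n)$ by a Chernoff bound: $m \ge t/(2n)$ with probability at least $1 - \varepsilon/2$ provided $t/n$ is a sufficiently large constant. Choosing $t$ to be an appropriate constant (depending on $\varepsilon$) multiple of $\max\{n^2 \log n / k,\ n\}$ then guarantees $mk \ge C_1 n \log n$ with probability $1 - \varepsilon/2$, which combined with the previous paragraph gives $\norm{X_t - U}_{\TV} \le \varepsilon$. This delivers $t_{\textup{mix}}(\varepsilon) = O_\varepsilon(n^2 \log n / k)$ for $k \in [n, n\log n]$ and $t_{\textup{mix}}(\varepsilon) = O_\varepsilon(n)$ for $k = \Omega(n \log n)$. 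I expect the main technical point to be the pushing trick: verifying that the conjugates $\tau_j'$ are jointly independent and each independent of $A$, so that $\Pi$ genuinely has the law of $mk$ i.i.d.\ uniform star transpositions, reduces to noting that for any fixed $\alpha$ the image $\alpha^{-1}(L)$ of a uniform $L$ is uniform and its law does not depend on $\alpha$.
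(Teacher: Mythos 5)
Your proposal is correct, and it reaches the same conclusion as the paper by a genuinely different mechanism. For the upper bound, the paper constructs an explicit coupling of two copies of the chain: when $R<n$ both copies apply the \emph{same} generator (so no matched card becomes unmatched), and when $R=n$ the copies follow the classical star-transpositions coupling; the coupling time is then controlled by Diaconis' bound together with a binomial tail estimate on how often $R=n$ occurs. You instead avoid couplings altogether by conjugating the $R<n$ steps past the $R=n$ steps to factor $X_t=A\,\Pi$ with $A$ fixing $n$ and $\Pi$ (conditionally on $m$) an independent product of $mk$ i.i.d.\ star transpositions, then invoke translation-invariance of $\|\cdot-U\|_{\TV}$. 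Both routes need exactly the same two external inputs (star-transpositions mixing and Chernoff on $m\sim\Bin(t,1/n)$), but your decomposition sidesteps the need to verify that the interleaved small steps are compatible with the star-transpositions coupling, which is the most delicate point in the paper's version. The independence claim — that the conjugated legs $\beta_j^{-1}(L_{T_j,\cdot})$ are i.i.d.\ uniform and independent of the $\alpha$'s — does hold, precisely because conditionally on $(\alpha_0,\dots,\alpha_m)$ the $L$'s are i.i.d.\ uniform and $\beta_j^{-1}$ is a measurable bijection, so the conditional law of $\Pi$ does not depend on the $\alpha$'s; you are right that this is the crux. For the lower bound, the paper uses the spectral inequality $|\beta|^t\le 2\|\OSC_{n,k}^t-U\|_{\TV}$ applied to the eigenvalue $\eig(T_n)>1-\tfrac{1}{n+1}$, whereas you use the distinguishing event \say{position $n$ is a fixed point}, whose probability is at least $(1-1/n)^t$ under the chain versus $1/n$ under $U$; these are morally the same bound (this event is essentially the observable witnessing that eigenvalue), and both give $t_{\mathrm{mix}}(\varepsilon)=\Omega_\varepsilon(n)$. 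One small omission relative to the paper's write-up: the paper also notes that $t_{\mathrm{rel}}=\Omega(n)$ and invokes the relaxation-time criterion to conclude absence of cutoff in the regime $k=\Omega(n\log n)$; your lower bound in fact yields the same conclusion (since it shows $t_{\mathrm{mix}}(\varepsilon)\gtrsim n\log(1/\varepsilon)$, so the window is order $t_{\mathrm{mix}}$), but you did not spell that out, and it is not required by the statement of the theorem as given.
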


We now outline the remainder of the paper. In Section \ref{sec:prelims}, we give the definitions needed to describe the spectrum of $P_{n,k}$. Section \ref{sec:eig-general} contains the proof of Theorem \ref{thm:general-upper}(i). Theorem \ref{thm:general-upper}(ii) is proven is Section \ref{sec:eig-main}. In Section \ref{sec:l2}, we present the $\ell^2$ lower bounds summarized in Theorem \ref{thm:l2-lower-bounds}. The total variation bound of Theorem \ref{thm:lower} can be found in Section \ref{sec:lb}. Theorem \ref{thm:large-k} is proved in Section \ref{sec:lk}.

We conclude our introduction by suggesting a few interesting open questions. We first suggest the question of if there is total-variation cutoff in (R2) and whether it coincides with the $\ell^2$-cutoff. 

Another natural question to ask concerns the limit profile of the shuffle. The limit profile (if it exists) is defined as the function
\[\Phi(c)= \lim_{n \rightarrow \infty} d^{(n,k)}(t_{n,k}+cw_{n,k}),\]
where $c \in \mathbb{R}$ and $t_{n,k}, w_{n,k}$ are the cutoff time of the Markov chain and the corresponding window. There are a few examples of famous Markov chains whose limit profile has been determined \cite{BD,BuN,LP}. Recently, there has been exciting progress on developing techniques to determine limit profiles \cite{EN, NS, Teyssier}, which work well for conjugacy class invariant random walks or random walks where we have knowledge of the eigenvalues and the eigenvectors of the transition matrix. It would be very interesting to determine the limit profile of $\OSC_{n,k}$ for $k=n^{o(1)}$, or simply $k=1$, since it cannot be studied by the already existing techniques and could lead to developing new ones. 

\section{Preliminaries: Partitions and Standard Young Tableaux}\label{sec:prelims}

In this section, we introduce several standard definitions involving partitions and standard Young tableaux. A \vocab{partition} $\lambda$ of an integer $n$ is a tuple $(\lambda_1,\cdots,\lambda_r)$ of positive integers summing to $n$ such that $\lambda_1\ge \cdots \ge \lambda_{r}.$ We will write $\lambda\vdash n$ to indicate that $\lambda$ is a partition of $n$, and let $l(\lambda):=r$ denote the \vocab{length} of $\lambda$, i.e. the number of parts of $\lambda$.

We may associate a partition $\lambda$ to its \vocab{Young diagram}, which has $l(\lambda)$ rows of left-aligned boxes, such that from top to bottom the rows have $\lambda_1, \lambda_2, \cdots, \lambda_{r}$ boxes. For example, the partition $(6,4,2)\vdash 12$ corresponds to the following Young diagram:
$$\ydiagram{6,4,2}$$
We will often refer to a partition $\lambda$ and its diagram interchangeably. For example, for partitions $\lambda,\mu\vdash n,$ we write that $\lambda\trianglerighteq \mu$ (``$\lambda$ \vocab{dominates} $\mu$'') if $\mu$ can be obtained by moving boxes in $\lambda$ down and to the left.

Given $\lambda\vdash n,$ we can create a \vocab{standard Young tableau} of \vocab{shape} $\lambda$ by placing each of the numbers $1,2,\cdots, n$ in the diagram of $\lambda$ such that the numbers are strictly increasing across each row and down each column. For example, the following is a standard Young tableau of shape $(6,4,2)\vdash 12:$
$$\ytableaushort{123567,489{10},{11}{12}}$$
For a standard Young tableau $T$, we let $T(i,j)$ denote the number in the $i-$th row and $j-$th column. For $T$ given above, $T(2,1)=4.$

For $\lambda\vdash n,$ we denote $\SYT(\lambda)$ as the set of all standard Young tableaux of shape $\lambda.$ We let $d_\lambda := |\SYT(\lambda)|$ indicate the \vocab{dimension} of $\lambda$. Calculating $d_\lambda$ is challenging in general, and is given by the famous hook-length formula. For our purposes, the following bound---which we will recall later on---suffices.
\begin{proposition}[Corollary 2 in \cite{Diaconis1981GeneratingAR}]\label{pre:dlambda-bound}
Let $\lambda_1$ denote the first part of a partition $\lambda\vdash n$. Then
$$\sum_{\substack{\lambda\vdash n\\ \lambda_1=n-m}}d_\lambda^2 < \frac{n^{2m}}{m!}.$$
\end{proposition}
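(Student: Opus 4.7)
The plan is to interpret $\sum_{\lambda \vdash n,\, \lambda_1 = n - m} d_\lambda^2$ combinatorially via the Robinson--Schensted correspondence and then bound it by an elementary overcounting argument. Recall that RSK sets up a bijection between $S_n$ and pairs $(P, Q)$ of standard Young tableaux of the same shape $\lambda \vdash n$, so $\sum_\lambda d_\lambda^2 = n!$, and more finely $\sum_{\lambda_1 = n - m} d_\lambda^2$ equals the number of permutations whose RSK insertion shape has first row of length exactly $n - m$. By Schensted's theorem, that length equals the longest increasing subsequence of $\pi$, so the quantity we wish to bound is the number of $\pi \in S_n$ whose longest increasing subsequence has length exactly $n - m$.

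Next I would upper-bound this by the (larger) number of permutations that merely contain some increasing subsequence of length $n - m$. Enumerating these with multiplicity: pick the $n - m$ positions hosting such a subsequence ($\binom{n}{n-m}$ choices), pick the $n - m$ values placed there ($\binom{n}{n-m}$ choices), place them in increasing order (forced), and fill the remaining $m$ positions with the $m$ leftover values arbitrarily ($m!$ ways). This yields the overcount $\binom{n}{n-m}^2 m!$. Since $\binom{n}{n-m} = \binom{n}{m} \le n^m/m!$, we get $\binom{n}{n-m}^2 m! \le n^{2m}/m!$, with strict inequality because the overcounting is nontrivial for $m \ge 1$.

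If one prefers to avoid RSK, the same bound follows from a branching-type inequality $d_\lambda \le \binom{n}{m} d_\mu$, where $\mu$ is obtained by removing the first row of $\lambda$: choose which $m$ of the $n$ entries sit below the top row ($\binom{n}{m}$ ways), arrange them as an SYT of shape $\mu$ ($d_\mu$ ways), and write the remaining $n-m$ entries in increasing order along the top row; this overcounts since it ignores the column conditions between the top row and the cells directly below it. Summing the square and using $\sum_{\mu \vdash m} d_\mu^2 = m!$ then reproduces the $\binom{n}{m}^2 m!$ bound.

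I do not anticipate a serious obstacle: both routes are essentially one-shot counting overestimates, and the $m!$ denominator on the right-hand side is exactly what the symmetry of the $m$ ``missing'' positions provides. The only point to check carefully is that the overcounting does not swamp the savings, which it does not, thanks to the double application of $\binom{n}{m} \le n^m/m!$.
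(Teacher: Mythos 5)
The paper does not supply its own proof; the result is imported directly as Corollary 2 of Diaconis and Shahshahani \cite{Diaconis1981GeneratingAR}, so there is no in-paper argument to compare against. Your proposal is correct, and in fact you give two valid routes. The RSK/Schensted route reduces $\sum_{\lambda_1 = n-m} d_\lambda^2$ to counting permutations whose longest increasing subsequence has length exactly $n-m$, and the overcount $\binom{n}{n-m}^2 m!$ together with $\binom{n}{m} \le n^m/m!$ gives the bound. Your alternative branching estimate $d_\lambda \le \binom{n}{m}\, d_\mu$ (with $\mu \vdash m$ obtained by deleting the first row of $\lambda$), squared and summed using $\sum_{\mu \vdash m} d_\mu^2 = m!$, is essentially the argument appearing in the cited source, so that route recovers the original proof. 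Two boundary remarks worth recording: at $m=0$ the claimed strict inequality fails (both sides equal $1$), so the statement should be read as holding for $m \ge 1$, which is the only way the paper uses it; and at $m=1$ the estimate $\binom{n}{m}\le n^m/m!$ is already an equality, so the strictness in that case rests on the overcount being proper (e.g., the identity permutation is counted but does not have longest increasing subsequence exactly $n-1$), which is the point you correctly flag at the end of the RSK argument.
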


As we will see in the next section, the standard Young tableaux index the eigenvalues of the one-sided $k-$transposition shuffle, thus playing an essential role in our analysis.

In this section, we analyze the eigenvalues of the one-sided $k-$transposition shuffle, giving bounds that will help obtain results about the shuffle's mixing time in several regimes. The following result shows that these eigenvalues are indexed by standard Young tableaux. We defer the proof, which uses tools from representation theory, to the appendix.

\begin{theorem}\label{thm:eig-syt}
The eigenvalues of $P_{n,k}$ are labeled by standard Young tableaux of size $n$, where $T(i,j)$ is the entry in the $(i,j)$ coordinate of the standard tableau $T$, and
\begin{equation}\label{eq:eig-syt} \eig(T) = \frac{1}{n}\sum_{(i,j) \in T} \left(\frac{1 + j - i}{T(i,j)}\right)^k. \end{equation}
Furthermore, the eigenvalue $\eig(T)$ corresponding to a standard Young tableau $T$ of shape $\lambda$ appears $d_\lambda$ times.
\end{theorem}
Applying \eqref{lbound}, this reveals the following bound, which is central to our analysis.
\begin{equation}\label{prop:tv-eig-bound}
4\norm{\OSC_{n,k}^t - U}_{\TV}^2 \le \sum_{\substack{\lambda\vdash n\\ \lambda \neq (n)}}d_\lambda \sum_{T\in SYT(\lambda)}\eig_k(T)^{2t}.
\end{equation}

In \Cref{tab:eigs}, we provide eigenvalues of $P_{4,k}$ in the cases $k=1, 2, 3,$ and $4,$ which are labeled by the standard Young tableaux of size $4$. Note that some eigenvalues are negative. Furthermore, \Cref{thm:eig-syt} implies that the eigenvalues of $P_{n,k}$ are rational.

\begin{table}[h]
  \centering
  \ytableausetup{smalltableaux}
  \caption{$\eig_k(T)$ for $T\in \SYT(\lambda)$ and $|\lambda|=4$, rounded to three decimal places}
  \label{tab:eigs}
  \begin{tabular}{ccccccccccc}
    \toprule
    &
    \ytableaushort{1234} &
    \ytableaushort{123,4} &
    \ytableaushort{124,3} &
    \ytableaushort{134,2} &
    \ytableaushort{12,34} &
    \ytableaushort{13,24} &
    \ytableaushort{12,3,4} &
    \ytableaushort{13,2,4} &
    \ytableaushort{14,2,3} & 
    \ytableaushort{1,2,3,4} \\
    \midrule
    $k=1$ &
    1.0 &
    0.75 &
    0.688 &
    0.604 &
    0.563 &
    0.479 &
    0.438 &
    0.354 &
    0.292 &
    0.042 \\
    $k=2$ &
    1.0 &
    0.75 &
    0.641 &
    0.502 &
    0.516 &
    0.377 &
    0.516 &
    0.377 &
    0.340 &
    0.340 \\
    $k=3$ &
    1.0 &
    0.75 &
    0.605 &
    0.430 &
    0.504 &
    0.328 &
    0.496 &
    0.320 &
    0.272 &
    0.201 \\
    $k=4$ &
    1.0 &
    0.75 &
    0.579 &
    0.379 &
    0.501 &
    0.300 &
    0.501 &
    0.300 &
    0.269 &
    0.269 \\
    \bottomrule
  \end{tabular}
\end{table}

\section{Upper bound in the general case}\label{sec:eig-general}

In this section, we show \Cref{thm:general-upper}(i), which states that $\OSC_{n,k}$ mixes in at most $n\log n + cn$ time for all positive integers $k \neq 2.$ This makes sense intuitively, as the case $k=1$ was shown in \cite{raven}, and we would expect more transpositions at each step to only speed up the shuffle. For odd $k$, it is easy to prove the result from the case $k=1$, where as the result for $k=2$ needs slightly more work.\par
We refer to the analysis of section 2.2 in \cite{raven} to show the following lemma is sufficient to prove Theorem \ref{thm:general-upper}.

\begin{lemma}\label{lem:raven}
Assume $k\ge 1$ and $n$ is sufficiently large. Then for $T\in \SYT(\lambda)$ where $\lambda \vdash n$ with $\lambda_1 = n-r,$
$$\left|\eig(T)\right|\le 
\begin{cases} 
      \frac{n-r}{n} + \frac{1}{n}\sum_{j=1}^{r} \frac{j-1}{n-r+j}, & \textup{if }  m\le \frac{n}{4} \\
      \frac{n-r}{n} + \frac{1}{n}\sum_{j=2}^{n-r} \frac{j-1}{n-r+j} + \frac{n - 2(n-r)}{3n}, & \textup{otherwise.}
   \end{cases}$$
\end{lemma}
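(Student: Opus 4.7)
The strategy is to apply the decomposition $\eig(T) = F_0(T) + F_+(T)$ of \eqref{eq:decomposition} and to analyze $F_0$ via an extremal ``super-standard'' tableau, while controlling $F_+$ through Proposition \ref{prop:fplus}. The key observation is that each summand $(j/T(i,j))^k$ in $F_0(T)$ is non-increasing in $k$ (since $j \le T(i,j)$), so $F_0(T) \le F_0^{(k=1)}(T) = \frac{1}{n}\sum_{(i,j) \in T} j/T(i,j)$. Moreover, a direct evaluation of \eqref{eq:nu-formula} at $k=1$ gives $\nu(\pi, a) = (\pi_a + 2 - a)/(|\pi|+1)$, recovering the content-style expression $\eig^{(1)}(T) = \frac{1}{n}\sum_{(i,j)}(j - i + 1)/T(i,j)$ of Bate--Connor--Matheau-Raven \cite{raven}. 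Since $F_+^{(1)}(T) \le 0$ by \Cref{prop:nu-parttwo}, we have $F_0^{(1)}(T) = \eig^{(1)}(T) + |F_+^{(1)}(T)|$, reducing the problem to a bound on $\eig^{(1)}$ for an extremal tableau plus $F_+$-type corrections.

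The central combinatorial step is to identify the extremal tableau. For a fixed shape $\lambda$, the plan is to show that the ``super-standard'' tableau $T^*_\lambda$---obtained by filling rows in order from top to bottom---maximizes $\eig^{(1)}$. This follows from an exchange argument: if $T$ contains two consecutive $T$-values placed in different rows with the smaller value in the lower row, swapping them yields a valid tableau whose content-weighted sum $\sum(j-i+1)/T(i,j)$ is not smaller. Next, among shapes $\lambda \vdash n$ with $\lambda_1 = n-m$, the case split in the statement arises from which shape maximizes $\eig^{(1)}(T^*_\lambda)$: for $m \le n/2$ the two-row shape $(n-m, m)$ is admissible and a direct computation gives $\eig^{(1)}(T^*_{(n-m,m)}) = \frac{n-m}{n} + \frac{1}{n}\sum_{j=1}^m (j-1)/(n-m+j)$, with additional rows only contributing negative-content boxes. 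For $m > n/2$ the two-row shape is infeasible, and the max is attained by a shape with $\lambda_2 = n-m$ and overflow in rows $\ge 3$, giving the truncated sum $\sum_{j=2}^{n-m}(j-1)/(n-m+j)$ from row 2 plus the row-$\ge 3$ correction $\frac{n-2(n-m)}{3n}$ bounded crudely using $T(i,j) \ge ij \ge 2j$.

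To combine these into the claimed bound and extend to all $k \ge 1$, I would perform a case split on $T(2,1)$. When $T(2,1) > n/c$ for a fixed constant $c$, \Cref{prop:fplus} gives $|F_+^{(k)}(T)| = O(1/n^2)$, and combined with $F_0^{(k)}(T) \le F_0^{(1)}(T) \le \eig^{(1)}(T^*_\lambda) + O(1/n^2)$, one recovers the $O(1/n^2)$ error in case (i). When $T(2,1)$ is small, \Cref{prop:fzero} shows $F_0(T)$ is itself smaller by a term of order $k(n-m-T(2,1))/n^2$, which absorbs the weaker $|F_+^{(k)}(T)| \le 2\log n/n$ bound and keeps $|\eig(T)|$ within the claimed estimate.

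The main obstacle is the exchange argument for $T^*_\lambda$: each admissible swap of two consecutively-valued boxes in a standard Young tableau must be shown to monotonically change the content-weighted sum $\sum(j-i+1)/T(i,j)$, which requires case analysis on the relative positions of the swapped boxes. Beyond this, the careful bookkeeping needed to fit the various propositions of \Cref{sec:eig-bounding} into the precise $O(1/n^2)$ error of case (i) and the explicit constant $\frac{n-2(n-m)}{3n}$ of case (ii) is the other delicate part.
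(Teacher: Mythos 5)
Your high-level strategy---decompose $\eig(T)=F_0(T)+F_+(T)$, extremize $F_0$ over a greedily-filled tableau, and control $F_+$ via \Cref{prop:fplus}---is the right family of ideas, and the monotonicity $F_0^{(k)}(T)\le F_0^{(1)}(T)$ you use is correct since $j\le T(i,j)$. The exchange argument you sketch is essentially the one in the paper's \Cref{prop:case1-step1}. But two specific steps in the proposal cannot deliver the stated error terms.

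First, the detour through $\eig^{(1)}$ introduces an uncontrollable term. You write $F_0^{(1)}(T)=\eig^{(1)}(T)-F_+^{(1)}(T)$ and plan to bound the right side; but this requires a \emph{lower} bound on $F_+^{(1)}(T)$, i.e., an upper bound on $|F_+^{(1)}(T)|$. \Cref{prop:fplus} only gives $F_+(T)>-\frac{2\log n}{n}$ in general, and even in the favorable regime $T(2,1)>n/c$ the lower-bound side of that argument is driven by the $u=1$ terms $\sum_{i\ge n/c}i^{-1}=O(1)$, so at best $F_+^{(1)}(T)\gtrsim -O(1/n)$, never $-O(1/n^2)$. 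After recombining with $F_+^{(k)}(T)$ you get $\eig^{(k)}(T)\le\eig^{(1)}(T^*_\lambda)+O(1/n)$, which overshoots the claimed $O(1/n^2)$ error for $m\le n/2$. The paper avoids this entirely: it never converts to $\eig^{(1)}$, it bounds $F_0$ directly in \Cref{prop:f0-general-case} and only ever invokes the one-sided bound $F_+(T)<O(1/n^2)$ from \eqref{eq:fplus2}, which is an upper bound and does not need a matching lower bound.

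Second, your extremal tableau $T^*_\lambda$ discards the entry $T(2,1)$. The paper's comparison object $T^\rightarrow_{(n-m,*),s}$ in \Cref{prop:case1-step1} keeps $T(2,1)=s$ pinned, and this is what produces the term $-\frac{0.9}{n}\log\frac{n-m}{s}$ in \Cref{eq:general-final-bound}. That negative term is what absorbs the weaker $F_+(T)<\frac{1}{n}$ bound when $s$ is small. Your proposed substitute---the $-(n-m-T(2,1)+1)k/(2n^2)$ term from \Cref{prop:fzero}---does not work uniformly in $k$: for $k=1$ it is only $O(1/n)$ and cannot absorb a $\Theta(\log n/n)$ error, and worse, \Cref{prop:fzero} carries the additive error $2^{1-k}$, which is $\Theta(1)$ for bounded $k$ and renders that proposition vacuous precisely in the regime where you want to apply it. What is needed for small $T(2,1)$ and small $k$ is the $s$-dependent extremal bound, not \Cref{prop:fzero}. (Incidentally, even the paper's reduction in \Cref{prop:f0-general-case} to $j^2/T_s^\rightarrow(i,j)^2$ requires $k\ge 2$; the $k=1$ case of \Cref{lem:raven} is really covered by the $k=1$ analysis of \cite{raven} that it is modeled on.)
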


The following lemma lets us characterize how $\eig(T)$ changes.

\begin{lemma}\label{lem:difference} Suppose $T \in \text{SYT}(\lambda), \lambda \vdash n$. Say that $(i_1,j_1),(i_2,j_2) \in T$ are such that $T(i_1,j_1) < T(i_2,j_2)$. Let $S$ be the tableau (not necessarily standard) obtained by switching the numbers in these coordinates, then for odd $k$,

\begin{equation}\eig(S) - \eig(T) \begin{cases} \geq 0\,\,\,\,\text{ if } (i_1 - i_2) + (j_2 - j_1) \geq 0 \\ < 0\,\,\,\, \text{ if } (i_1 - i_2) + (j_2 - j_1) < 0 \end{cases}, \end{equation}

\end{lemma}
\begin{proof}
Since the only difference between $S,T$ is at the coordinates $(i_1,j_1),(i_2,j_2)$, we have that
\begin{equation*}\begin{split} \eig(S) - \eig(T) &= \left(\frac{j_2 - i_2 + 1}{T(i_1,j_1)}\right)^k + \left(\frac{j_1 - i_1 + 1}{T(i_2,j_2)}\right)^k - \left(\frac{j_1 - i_1 + 1}{T(i_1,j_1)}\right)^k - \left(\frac{j_2 - i_2 + 1}{T(i_2,j_2)}\right)^k\\
&= \left(\frac{1}{T(i_1,j_1)^k} - \frac{1}{T(i_2,j_2)^k}\right)\left((j_2 - i_2 + 1)^k - (j_1-i_1+1)^k\right).\end{split}\end{equation*}
Then the sign of $\eig(S)- \eig(T)$ depends only on the sign of $(j_2-i_2+1)^k - (j_1-i_1+1)^k$, which gives the result after breaking into the cases when $k$ is even and odd.
\end{proof}
Let $ \lambda \vdash n$ and let $T^\rightarrow_\lambda$ denote the SYT with numbers filled into $\lambda$ left to right. Similarly, let $T^\downarrow_\lambda$ denote the SYT with numbers filled from top to bottom. The following lemmas are generalizations of Lemmas 9 and 12 of \cite{raven}.
\begin{lemma}\label{lem:oddfill}
Suppose that $k$ is odd, and $\lambda \vdash n$. Then for any $T \in \text{SYT}(\lambda)$
$$\eig(T^\downarrow_\lambda) \le \eig(T) \le \eig(T^\rightarrow_\lambda).$$
\end{lemma}
\begin{proof}
Reading across the rows of $T$, starting from the top, find the first box where $T$ and $T_\lambda^\rightarrow$ are different, denote its coordinates as $(i,j)$. The number $T(i,j) - 1$ must occur in a box below and strictly left of $(i,j)$. By switching the entries $T(i,j)$ and $T(i,j) - 1$, we obtain another standard tableau such that its associated eigenvalue is larger due by Lemma \ref{lem:difference}. The statement of the lemma follows by induction on the number of boxes which $T$ and $T_\lambda^\rightarrow$ agree at.
\end{proof}

There is an analogous monotonicity statement in terms of comparing eigenvalues corresponding to different partitions.
\begin{lemma}\label{lem:oddshape}
Suppose that $k$ is odd. If $\lambda,\mu \vdash n$, $\lambda \trianglerighteq \mu$, then 
$$\eig(T^\rightarrow_\lambda) \ge \eig(T^\rightarrow_\mu)\,\,\,\,\, \textup{ and }
\eig(T^\downarrow_\lambda) \ge \eig(T^\downarrow_\mu).$$
\end{lemma}
\begin{proof}
The strategy of this proof follows very closely to the proof of lemma 12 of \cite{raven}. We prove that this $\eig(T^\rightarrow_\lambda) \le \eig(T^\rightarrow_\mu)$, where $\mu$ is obtained from $\lambda$ by moving a single box from the end of a row at coordinates $(a,\lambda_a)$ to the end of a lower row with coordinates $(b,\lambda_b+1)$. We allow for the case that $b = l(\lambda) + 1$ and $\lambda_b = 0$. The change in the eigenvalue is the following:

\begin{equation*} \begin{split} 
n(\eig(\lambda) - \eig(\mu)) &= \left(\frac{\lambda_a - a + 1}{T_\lambda^\rightarrow(a,\lambda_a)}\right)^k - \left(\frac{\lambda_b - b + 2}{T_\mu^\rightarrow(b,\lambda_b+1)}\right)^k \\ 
&+ \sum_{\substack{(i,j) \in \lambda \cap \mu \\ a < i \le b}} \left(\frac{1}{T^\rightarrow_\lambda(i,j)^k} + \frac{1}{T^\rightarrow_\mu(i,j)^k}\right)(j-i+1)^k \\
&\ge \left(\frac{\lambda_a - a + 1}{T_\lambda^\rightarrow(a,\lambda_a)}\right)^k - \left(\frac{\lambda_b - b + 1}{T_\mu^\rightarrow(b,\lambda_b+1)}\right)^k \\
&= (\lambda_a - a + 1)^k\left(\frac{1}{T^\rightarrow_\lambda(a,\lambda_a)^k} + \frac{1}{T^\rightarrow_\mu(b,\lambda_b+1)^k}\right)\\
&= \frac{(\lambda_a - a +1)^k - (\lambda_b - b + 2)^k}{T(b,\lambda_b+1)^k}.
\end{split}
\end{equation*}

This final expression is positive, since $b > a$ and $\lambda_a > \lambda_b$.

\end{proof}

Using the bound of Lemma \ref{lem:oddfill} we get the following.
\begin{lemma}\label{lem:biglam} 
For odd $k$, the $\ell_2$ bound becomes
$$4\norm{\OSC_{n,k}^t - U}_{\TV}^2 \le \eig(T^\rightarrow_{(1^n)})^{2t} + 2\sum_{\substack{\lambda: \eig(T^\rightarrow_\lambda) \ge 0 \\ \lambda \neq (n)}} d_\lambda^2 \eig(T^\rightarrow_\lambda)^{2t}.$$
\end{lemma}
\begin{proof}
Notice that for any tableau $T \in \text{SYT}(\lambda)$,
$$\eig(T) + \eig(T') = \sum_{(i,j) \in T} \frac{(j-i+1)^k - (j-i-1)^k}{T(i,j)} > 0,$$
where $T' \in \text{SYT}(\lambda')$ and $T'(i,j) = T(j,i)$. This implies that if $\eig(T) < 0$, then $\eig(T') > |\eig(T)|$. By construction we have that $(T_\lambda^\downarrow)' = T_{\lambda'}^\rightarrow$. It follows that if $\eig(T_\lambda^\rightarrow) < 0$, then 
$$|\eig(T_\lambda^\downarrow)| < \eig(T_{\lambda'}^\rightarrow).$$ 
The rest of this proof follows exactly the same as in section 2.2 of \cite{raven}. 
\end{proof}

For odd $k$, our analysis is almost the same as in \cite{raven}, we only need justify that each part of the analysis is not changed by raising to the power of $k$. Lemmas 9, 10 and 11 show that for $\lambda$ with $\lambda_1 = n - r \geq 3n/4$, the maximal eigenvalue appearing among all such $T$ with shape $\lambda$ is $T^\rightarrow_{(n-r,r)}$, the tableau which fills row one with $1,2,\ldots,n-r$, and row two with $n-r+1, n-r+2, \ldots, n$. Then from the formula for $\eig(T)$,
$$\eig(T) \leq \eig(T^\rightarrow_{(n-r,r)}) = \frac{n-r}{n} + \frac{1}{n}\sum_{j=1}^{r} \left(\frac{j-1}{n-r+j}\right)^k \leq \frac{n-r}{n} + \frac{1}{n}\sum_{j=1}^{r} \frac{j-1}{n-r+j}.$$
If $n-r < \frac{n}{2}$, then for any $T$ with shape $\lambda$, $\eig(T) \leq \eig(T^\rightarrow_{(n-r,\star)})$, where $(n-r,\star)$ is the partition with as many parts equal to $n-r$ as possible, and $T^\rightarrow_{(n-r,\star)}$ fills the rows of this diagram from left to right, top to bottom. It is apparent that $\frac{j-i+1}{T^\rightarrow_{(n-r,\star)}(i,j)} \leq \frac{1}{3}$. With this

\begin{equation}\label{eq:smallpart}\begin{split} \eig(T) \leq \eig(T^\rightarrow_{(n-r,\star)}) &= \frac{n-r}{n} + \frac{1}{n}\sum_{j=1}^{n-r} \left(\frac{j-1}{n-r+j}\right)^k + \frac{1}{n}\sum_{\substack{(i,j) \\ i > 2}} \left(\frac{j-i+1}{T^\rightarrow_{(n-r,\star)}(i,j)}\right)^k \\ &\leq \frac{n-r}{n} + \frac{1}{n}\sum_{j=1}^{n-r} \frac{j-1}{n-r+j} + \frac{n - 2(n-r)}{3n}.\end{split}\end{equation}

Furthermore, this holds even if $ n/4 < n-r < n/2$, as shown in $\cite{raven}$, and so lemma \ref{lem:raven} applies to give the result.
\par\medskip

The analysis for even $k$ will be parallel to the case $k=1$, but it will need some adjustments. First, since $\frac{j-i+1}{T(i,j)} < 1$ for each $(i,j)$, it is clear from Equation \ref{eq:eig-syt} that proving the mixing time in the case that $k=4$ will be sufficient for all even $k \ge 4$. Next, to make later calculations easier, define
$$F(T) := \frac{1}{n}\sum_{(i,i)\in T} \left(\frac{|j-i|+1}{T(i,j)}\right)^k.$$
This is larger than $\eig(T)$, since the triangle inequality gives $|j-i+1| \le |j-i| + 1$. Denote $a^T_{i,j} = \left(\frac{|j-i|+1}{T(i,j)}\right)^2$.\par
For any $T \in \text{SYT}(n)$, let $T_U := \{(i,j) \in T: j \ge i\}$ and $T_D = \{(i,j) \in T:= j < i\}$. Further, let $F_U(T) = \sum_{(i,j) \in T_U} a^T_{i,j}$ and $F_D(T)$ be defined similarly. For each $\lambda \vdash n$, let $T^\text{max}_\lambda$ be the filling of $\lambda$ which is such that $F(T) \le F(T^\text{max}_\lambda)$ for all $T \in \text{SYT}(\lambda)$; in the case more than one filling maximizes $F(T)$ then choose $T^\text{max}_\lambda$ such that $\sum_{T_U} a^T_{i,j}$ is as large as possible. Define 
$$\Lambda^+_n = \{\lambda \vdash n: F_U(T^\text{max}_\lambda) \ge F_D(T^\text{max}_\lambda)\},$$
$$\Lambda^-_n = \{\lambda \vdash n: F_U(T^\text{max}_\lambda) < F_D(T^\text{max}_\lambda)\}.$$

\par

\begin{lemma}\label{lem:useful} If $\lambda \in \Lambda^-_n$, then $\lambda' \in \Lambda_n^+$ and $F(T^\text{max}_\lambda) = F(T^\text{max}_{\lambda'})$.
\end{lemma}
\begin{proof}
Let $S = T^\text{max}_\lambda$. Notice that if $(i,j) \in S_D$, then $(j,i) \in S'_U$, simply by definition. 
$$\sum_{(i,j) \in S_U} a^T_{i,j} < \sum_{(i,j) \in S_D} a^T_{i,j} \implies \sum_{(i,j) \in S'_D} a_{i,j}^{T'} < \sum_{(i,j) \in S'_U} a_{i,j}^{T'}.$$
The statement $\lambda' \in \Lambda^+_n$ follows since $a^T_{i,j} = a_{i,j}^{T'}$. This also shows that for any $T \in \text{SYT}(n)$,
$$\sum_{(i,j) \in T} a^T_{i,j} = \sum_{(i,j) \in T'} a^{T'}_{j,i},$$
thus proving that $F(T) = F(T')$.

\end{proof}

Rewrite equation \ref{lbound} as follows;

\begin{equation*}\begin{split} \sum_{\substack{T \in \text{SYT}(n) \\ \eig(T) \neq 1}} \eig(T)^{2t}& = \sum_{\substack{\lambda \vdash n \\ \lambda \neq (n),(1^n)}}d_\lambda \sum_{T \in SYT(\lambda)} \eig(T)^{2t} + \eig(T^\text{max}_{(1^n)})^{2t}\\
&\leq  \sum_{\substack{\lambda \vdash n \\ \lambda \neq (n), (1^n)}} d_\lambda^2F(T^\text{max}_\lambda)^{2t} + \eig(T^\text{max}_{(1^n)})^{2t}\\
&= \sum_{\substack{\Lambda^+ \\ \lambda \neq (n)}} d_\lambda^2F(T^\text{max}_\lambda)^{2t} + \sum_{\substack{\Lambda^- \\ \lambda \neq (1^n)}} d_\lambda^2F(T^\text{max}_\lambda)^{2t} + \eig(T^\text{max}_{(1^n)})^{2t}.\end{split}\end{equation*}

The term $\eig(T^\text{max}_{(1^n)})$ is handled through a simple calculation,

$$\eig(T^\text{max}_{(1^n)}) = \frac{1}{n} + \frac{1}{n}\sum_{j=2}^n \left(\frac{j-2}{j}\right)^2 \le \frac{1}{n} + \frac{(n-1)(n-2)^2}{n^3} = 1 - \frac{4(n-1)^2}{n^3}.$$

Applying the inequality $1 - x \le e^{-x}$, and using $t = n\log(n) + cn$,

$$\eig(T^\text{max}_{(1^n)})^{2t} \leq \exp\left(\frac{-8n\log(n)(n-1)^2}{n^3} - \frac{4cn(n-1)^2}{n^3}\right).$$

For $n \ge 2$, \,$\frac{n(n-1)^2}{n^3} \ge \frac{(n-1)^3}{n^3} \ge \frac{1}{8}$, thus we've shown that $\eig(T^\text{max}_{(1^n)})^{2t} \le \frac{e^{-c/2}}{n}$.\par
Using lemma \ref{lem:useful} and the fact that $d_\lambda^2 = d_{\lambda'}^2$, we can bound mixing time
\begin{align}\begin{split}\label{eq:bbound}4||P_{n,k}^t - U||_{\text{T.V}} &\leq \sum_{\Lambda^+} d_\lambda^2F(T^\text{max}_\lambda) + \sum_{\Lambda^+} d_{\lambda'}^2 F(T^\text{max}_{\lambda'}) + \frac{e^{-c/2}}{n} \\
&\leq 2\sum_{\Lambda^+} d_\lambda^2 F(T^\text{max}_{\lambda}) + \frac{e^{-c/2}}{n}.\end{split}\end{align}

\begin{lemma}\label{lem:diff}  Suppose $T \in \text{SYT}(\lambda), \lambda \vdash n$. Say that $(i_1,j_1),(i_2,j_2) \in T$ are such that $T(i_1,j_1) < T(i_2,j_2)$. Let $S$ be the tableau (not necessarily standard) obtained by switching the numbers in these coordinates, then for any $k$

\begin{equation}F(S) - F(T) \begin{cases} \geq 0\,\,\,\,\text{ if } |j_2 - i_2| \ge |j_1 - i_1| \geq 0 \\ < 0\,\,\,\, \text{ if } |j_2 - i_2| < |j_1 - i_1| \end{cases}, \end{equation}

\end{lemma}
\begin{proof}
Similarly to Lemma \ref{lem:difference}, we calculate the following,
\begin{equation*}\begin{split} F(S) - F(T) &= \left(\frac{|j_2 - i_2| + 1}{T(i_1,j_1)}\right)^k + \left(\frac{|j_1 - i_1| + 1}{T(i_2,j_2)}\right)^k - \left(\frac{|j_1 - i_1| + 1}{T(i_1,j_1)}\right)^k - \left(\frac{|j_2 - i_2| + 1}{T(i_2,j_2)}\right)^k\\
&= \left(\frac{1}{T(i_1,j_1)^k} - \frac{1}{T(i_2,j_2)^k}\right)\left((|j_2 - i_2| + 1)^k - (|j_1-i_1| + 1)^k\right),\end{split}\end{equation*}
the lemma follows.
\end{proof}

The next lemma shows us we can bound $F(T)$ using a particularly simple shape.

\begin{lemma}\label{lem:maxshape} Suppose $\lambda \vdash n$ with $\lambda_1 = n - r$. If $T \in \text{SYT}(\lambda)$, then $F(T) \le F(T^\text{max}_{(n-r,1^r)})$.
\end{lemma}
\begin{proof}

Suppose we are given $T \in \text{SYT}(\lambda)$. Create a tableau $S \in \text{SYT}((n-r,1^r))$ so that $S(1,j) = T(1,j)$ for all $1 \le j \le n-r$; as in the first row of $S$ and $T$ match. The other elements of $T$ which are not in the first row are then placed down column $1$ of $S$ in increasing order, which ensures $S$ is standard. \par

\[
\begin{array}{ccc}
    \begin{ytableau}
    *(green) 1 & *(green) 3 & *(green) 4 & *(green) 6 \\
    *(yellow) 2 & *(yellow) 5 & *(yellow) 9 \\
    *(yellow) 7 & *(yellow) 8
    \end{ytableau} 
    
    & \quad \xrightarrow{\hspace{1cm}} \quad & 
    
    \begin{ytableau}
    *(green) 1 & *(green) 3 & *(green) 4 & *(green) 6 \\
    *(yellow) 2 \\
    *(yellow) 5 \\
    *(yellow) 7 \\
    *(yellow) 8 \\
    *(yellow) 9
    \end{ytableau} \\ [2.0cm]
    
    T & \quad  \quad & S
\end{array}
\]

Suppose that $(i_1,j_1) \in T$ with $i_1 \neq 1$. All the boxes in the same row and to the left of $(i_1,j_1)$, including itself, each contain numbers less than or equal to $T(i_1,j_1)$, since $T$ is standard. Similarly, all the elements in the first column above $(i_1,1)$ must also be less than or equal to $T(i_1,j_1)$. Thus, when $T(i_1,j_1)$ is placed into $S$, it must be in a box with coordinates $(I,1)$, where $I \ge i_1 + j_1 - 1$. This implies that $a^S_{I,1} \ge \frac{i_1 + j_1 - 2}{T(i_1,j_1)}$. Combining this with the general inequality

$$|j - i| \le \max\{j,i\} - 1 \le i + j - 2,$$

we have proven that $a^S_{I,1} \ge a^T_{i_1,j_1}$. Since this holds for all $(i_1,j_1) \in T, i_1 > 1$, and the first rows of $T$ and $S$ match, we have that $F(T) \le F(S)$, proving the lemma.
\end{proof}

\begin{lemma}\label{lem:maxfill} If $\lambda \vdash n$, $r \le n/2$, then $T^\text{max}_{(n-r,1^r)} = T^\rightarrow_{(n-r,1^r)}$. \end{lemma}
\begin{proof}
Suppose $T \in \text{SYT}((n-r,1^r))$ be some tableau. Create a tableau $T_1 \in \text{SYT}((n-r,1^r))$ via the following rule that $T_1(1,j) = \max\{T(1,j), T(j,1)\}, 1\le j \le r$ and $T_1(1,j) = T(1,j)$ otherwise, with the remaining column filled with the remaining elements in ascending order, as in $T_1(j,1) = \min\{T(1,j),T(j,1)\}$. The process compares $T(1,j)$ and $T(j,1)$ and either swaps them or leaves them so that the smaller number is in the row. Let us justify why this is a standard tableau. Given a $1 \le j \le r$, there are four cases: 
\begin{itemize}
\item $T(1,j) > T(j,1)$ and $T(1,j+1) > T(j+1,1)$
\item $T(1,j) > T(j,1)$ and $T(1,j+1) < T(j+1,1)$
\item $T(1,j) < T(j,1)$ and $T(1,j+1) > T(j+1,1)$
\item $T(1,j) < T(j,1)$ and $T(1,j+1) < T(j+1,1)$.
\end{itemize} 
In the first or last case, we have $T_1(1,j) < T_1(1,j+1)$ and $T_1(j,1) < T_1(j+1,1)$ by definition. In the second case, $\min\{T(1,j),T(j,1)\} = T(j,1)$ and $\min\{T(1,j+1),T(j+1,1)\} = T(1,j+1)$. Since $T(1,j) < T(1,j+1)  < T(j+1,1)$ and $T(j,1) < T(1,j) < T(1,j+1)$, we have that $T_1(j,1) < T_1(j+1,1)$ and $T_1(1,j) < T_1(1,j+1)$ by definition. Similar reasoning works out the third case. Replacing $T$ with $T_1$, we may assume from now on that $T(1,j) < T(j,1)$ for all $1 < j \le r$. \par
Assume that $T \neq T^\rightarrow_{(n-r,1^r)}$, that is there is a minimal $j_1$ such that $b = T(1,j_1) > j_1$. This forces $T(1,j_1) - 1$ to appear in a box $(i_1,1)$ with $i_1 < j_1$, since $b - 1 < T(j_1,1)$. Let $S \in \text{SYT}((n-r,1^r))$ be a tableau $S$ obtained by switching $(1,j_1)$ and $(i_1,1)$ in $T$. By lemma \ref{lem:diff}, $F(S) \ge F(T)$. We also have and $S(1,j_1) = b - 1$, and inductively, we can continue doing this to obtain a tableau $R$ with $F(R) \ge F(T)$ and $R(1,j_1) = j_1$. Since $j_1$, $T^\text{max}_{(n-r,1^r)}$ is maximized if and only if there is no such $j_1$ with $T(1,j_1) > j_1$, and thus it must be equal to $T^\rightarrow_{(n-r,1^r)}$.
\end{proof}

\begin{lemma}\label{lem:largeupper} If $T \in \text{SYT}((n-r,1^r)), r > n/2$, then $T^\text{max} = T^\downarrow_{(n-r,1^r)}$.
\end{lemma}
\begin{proof}
Since $F(T) = F(T')$, apply lemma \ref{lem:maxfill} to $T'$ so that $F(T') \le F(T^\rightarrow_{(r+1,1^{n-r-1})})$. Transposing again proves the lemma.
\end{proof}

\textbf{Partitions with large first part.} Let $\lambda \vdash n$ and set $r=n-\lambda_1 $. This section focuses on   the regime where $1 \le r \le 7/10n $ and $k \ge 4$. 

\begin{lemma} For $t = n\log(n) + cn$

$$\lim_{n\to \infty} \sum_{r=1}^{7n/10} \sum_{\substack{\lambda \dashv n \\ \lambda_1 = n-r}} d_\lambda^2 F(T^\text{max}_\lambda)^{2t} \le e^{-2c}$$

\end{lemma}

\begin{proof}
We first handle the case $r = 1$. There are exactly $n-1$ SYT of this form, each determined by the value placed in the box at $(2,1)$. It is easily seen that the maximum value of $F(T)$ over all such $T \in \text{SYT}((n-1,1))$ is $1 - \frac{1}{n} + \frac{1}{n^{k+1}}$. It suffices to show that for $t = n\log(n) + cn$, 

\begin{equation}\label{quick}(n-1)^2\left(1 - \frac{1}{n} + \frac{1}{n^{k+1}}\right)^{2t} < e^{-c}.\end{equation}

By the inequality $1 - x \le e^{-x}$, the left side of \eqref{quick} is bounded by $(n-1)^2e^{-2(\log(n)+c)(1 - \frac{1}{n^k})}$, the limit of this final term as $n$ goes to infinity is $e^{-2c}$. 

For $r > 1$, by lemmas \ref{lem:maxshape}, \ref{lem:maxfill}, and \ref{lem:largeupper}, we may bound for any $T \in \text{SYT}(\lambda)$

\begin{equation}\label{eq:bound} F(T) \le \begin{cases}\frac{n-r}{n} + \frac{1}{n}\sum_{j=1}^r \left(\frac{j+1}{n-r+j}\right)^k \text{ for } r \le n/2 \\ \frac{r+1}{n} + \frac{1}{n}\sum_{j=2}^{n-r} \left(\frac{j}{r+j}\right)^k \text{ for } r> n/2.\end{cases}\end{equation}

First, since $\frac{j+1}{n-r+j}, \frac{j}{r+j}$ is increasing with respect to $j \ge 1$, we may bound

\begin{equation}\label{eq:up1}\sum_{j=1}^r \left(\frac{j+1}{n-r+j}\right)^k \le r \left(\frac{r+1}{n}\right)^k,\end{equation}
\begin{equation}\label{eq:up2}\sum_{j=2}^{n-r} \left(\frac{j}{r+j}\right)^k \le (n-r-1)\left(\frac{n-r}{n}\right)^k.\end{equation}

Write $r = an, a \in [1/n,1/2]$. Using equations \ref{eq:bound} and \ref{eq:up1}, 

$$F(T) \le 1-a + a\left(a + \frac{1}{n}\right)^k.$$

Similarly for $a \in [1/2,7/10]$,

$$F(T) \le a + \frac{1}{n} + \left(1 - a - \frac{1}{n}\right)(1-a)^k.$$

By proposition \ref{pre:dlambda-bound} and applying Stirling's approximation,

$$\log(r!) = r\log(r) - r + \frac{1}{2}\log(2\pi r) + o(1)$$

\begin{equation}\label{eq:stirling} d_\lambda^2 \le \frac{n^{2r}}{r!} = \exp\left(an\log(n) - an\log(a) + an + \epsilon_r \right),\end{equation}

where $\forall r, \,\epsilon_r \le D\sqrt{r}$, for some constant $D$. Combining equations \eqref{eq:bbound}, \eqref{eq:bound}, and \eqref{eq:up1} with lemma \ref{lem:maxfill}, we have

\begin{align}\begin{split}\label{eq:temp0} \sum_{r = 2}^{n/2}\sum_{\substack{\lambda \vdash n \\ \lambda_1 = n-r}} d_\lambda^2 F(T^\text{max}_\lambda)^{2t} & \le \sum_{r=1}^{n/2} \frac{n^{2r}}{r!}\left(\frac{n-r}{n} + \frac{r}{n}\left(\frac{r+1}{n}\right)^k\right)^{2t} 
\end{split}\end{align}
Recall that $a = r/n$, we get
\begin{align}\begin{split}\label{eq:temp} 
\eqref{eq:temp0}&\le \sum_{r=2}^{n/2} e^{an\log(n) - an\log(a) + an + \epsilon_r + (2n\log(n) + 2cn)(-a + a(a+ \frac{1}{n})^k)} \\
&= \sum_{r=2}^{n/2} n^{n\left(a\left(a+ \frac{1}{n}\right)^k - a - \frac{a\log(a)}{\log(n)}\right)}e^{n\left(-(2c-1)a + 2ca\left(a+\frac{1}{n}\right)^k\right)}e^{\epsilon_r}. \end{split}\end{align}

Since $a + \frac{1}{n} < 1$ for all $a \in [0,1/2]$ and $n > 2, k \ge 2$,

\begin{equation}\label{eq: notimportant} a\left(a+\frac{1}{n}\right)^k \le a\left(a + \frac{1}{n}\right)^2.\end{equation}

Let $g_n(x) = x\left(x + \frac{1}{n}\right)^2 - x - \frac{x\log(x)}{\log(n)}$. Through elementary analysis, $g_n(x)$ satisfies $g_n'(x) < 0, x \in [1/n,1/2]$ and $g_n(2/n) = \frac{162}{n^3} - \frac{\log(2)}{n\log(n)} < 0$ for all large $n$. In this same range of $n,k,x$, the term $-(2c-1)a + a\left(a+\frac{1}{n}\right)^k \le -(2c-2)a$. We have shown equation \eqref{eq:temp} is bounded by

$$\sum_{r=2}^{n/2} e^{-(2c-2)r}e^{\epsilon_r} \le \sum_{r=2}^\infty D\sqrt{r}e^{-(2c-2)r}.$$ 

The sum $\sum_{r=1}^\infty re^{-pr}$ is known to be bounded by $4e^{-p}$ when $p > 1$, thus we can conclude that \eqref{eq:temp} has an upper bound by $C_1e^{-(2c-2)}$ for $c > 1$.

for some appropriate constant $A$.\par
Similarly, we use equations \eqref{eq:bbound}, \eqref{eq:bound}, and \eqref{eq:up2} with lemma \ref{lem:largeupper} to bound

\begin{align}\begin{split}\label{eq:temp12} \sum_{r=n/2}^{7n/10}\sum_{\substack{\lambda \\ \lambda_1 = n-r}} d_\lambda^2 F(T^\text{max}_\lambda)^{2t} &\le \sum_{r=n/2}^{7n/10} \frac{n^{2r}}{r!}\left(\frac{r+1}{n} + \frac{n-r-1}{n}\left(\frac{n-r}{n}\right)^k\right)^{2t} 
\end{split}\end{align}
Since $r\geq n/2$, we have
\begin{align}\begin{split}\label{eq:temp1} 
\eqref{eq:temp12}
&\leq \sum_{r=n/2}^{7n/10} e^{an\log(n) - an\log(a) + an + \epsilon_r(a) + (2n\log(n) + 2cn)\log\left(a + \frac{1}{n} + (1-a - \frac{1}{n})(1-a)^k\right)} \\
&= \sum_{r=n/2}^{7n/10} n^{n(a + 2\log\left(a + \frac{1}{n} + (1-a - \frac{1}{n})(1-a)^k\right)}e^{M_n(a)},\end{split}\end{align}

where $\lim_{n\to\infty} \max_{a \in [1/2,7/10]} \frac{|M_n(a)|}{n\log(n)} = 0$. If we set,

\begin{equation}\label{eq:q1}f_n(x) = x + 2\log\left(x + \frac{1}{n} + (1-x - \frac{1}{n})(1-x)^k\right),\end{equation}
\begin{equation}\label{eq:q2}f(x) = x + 2\log(x + (1-x)^{k+1}),\end{equation}

Then $f_n$ converges to $f$ uniformly for $x \in [1/2,7/10]$. Observe that for $k \ge 4$, $f'(x) > 0, x \in [1/2,7/10]$, thus $f$ has a global maximum at $x = 7/10$, and $c = f(7/10) < 0$. It follows that for large $n$, $f_n$ also satisfies these properties. In particular, $f_n(x) < c/2$ for all $x \in [1/2,7/10]$ and large $n$,

$$\lim_{n\to \infty} \sum_{r=n/2}^{7n/10} n^{nf_n(a)}e^{M_n(a)} \le \lim_{n \to \infty} 3n/10n^{cn/2}e^{M_n(a)} = 0.$$

By equation \eqref{eq:temp1}, this finishes the proof of the theorem
\end{proof}

\par\medskip

\textbf{Partitions with small first part.} We now focus on the regime where $7n/10 < r \le n-2$. By equation \eqref{eq:bbound}, we need only consider $T \in \text{SYT}(\lambda), \,\lambda \in \Lambda^+_n$, and thus

\begin{equation}\label{eq:smallup}F(T) \le 2F_U(T),\,\, \text{ where } F_U(T) = \sum_{(i,j) \in T_U} \left(\frac{j-i+1}{T(i,j)}\right)^k.\end{equation}

The following lemma follows immediately from the fact that $T(i,j) < S(i,j) \iff a^T_{i,j} \ge a^S_{i,j}$.

\begin{lemma}\label{lem:obvious} Suppose we are given two SYT of the same shape $S,T$, such that the ordering among the squares $T_U,S_U$ are the same. If for each $(i,j) \in T_U$ we have $T(i,j) \le S(i,j)$, then $F_U(T) \ge F_U(S)$. 
\end{lemma}

We may get upper bounds for equation \eqref{eq:smallup} in a similar manner to the odd $k$ case: first, we assume the case that $\{T(i,j): j \ge i\} = \{1,2,\ldots,N\}$, where $N$ is the cardinality of the left hand side. By lemma \ref{lem:obvious}, this is an upper bound of $F_U(S)$, for any $S$ with the same shape as $T$. \par
Second, we need to find how the numbers must be filled in $T_U$ to maximize $F_U(T)$.

\begin{lemma}\label{lem:small1}
Given a shape $\lambda$ and a fixed set of numbers to be placed in all the $(i,j)$ with $j \ge i$, we call a filling standard if the rows and columns are in decreasing order for all such squares. Then the filling $T_U^\rightarrow$ - inserting the numbers from left to right in increasing order - maximizes the value of $F_U(T)$ over standard fillings. Note that we are essentially treating $T_U$ as it's own tableau, ignoring the fact that it may make the tableau $T$ as a whole non-standard.
\end{lemma}
\begin{proof}
To see why this should maximize $F_U(T)$, we appeal to \ref{lem:diff}. Suppose that a standard filling $T_U \neq T^\rightarrow_U$, and say that $t = T^\rightarrow_U(i_1,j_1)$ is the smallest number such that $T_U(i_1,j_1) \neq T(i_1,j_1)$. Then $t-1$ must occur in a square $(i_2,j_2)$ with $j_2 - i_2 \le j_1 - i_1$ by the construction of $T_U^\rightarrow$. Thus swapping $t$ and $t-1$ creates a standard filling $T'_U$ such that $F(T_U) \le F(T'_U)$. Inductively, we have that $F(T_U) \le F(T^\rightarrow_U)$. 
\end{proof}

Third, we need to find the shape $\lambda$ which maximizes $F_U(T_U^\rightarrow)$, given that $\lambda_1 = n-r$.

\begin{lemma}\label{lem:small2} Suppose $\lambda,\mu$ are shapes such that $\lambda$ is formed by moving an inner corner $(i_1,\lambda_{i_1}), \lambda_{i_1} \ge i_1$ of $\mu$ to a higher and thus more rightward outer corner $(i_2,\lambda_{i_2}+1)$ of $\mu$. Then $F_U((T^\rightarrow_\lambda)_U) \ge F_U((T^\rightarrow_\mu)_U)$.\end{lemma}
\begin{proof} Denote $\lambda_{i_1} = j_1, \lambda_{i_2} + 1 = j_2$. Say that $T_U^\rightarrow$ and $S_U^\rightarrow$ are the maximal fillings of $\lambda_U, \mu_U$ with the numbers $\{1,2,\ldots,N\}$. Notice that for all squares $(i,j)$ below or to the right of $(i_2,j_2)$ and above or to the left of $(i_1,j_2)$, we have that $T_U^\rightarrow(i,j) = S_U^\rightarrow(i,j) + 1$. This lets us compute

\begin{align}\begin{split}\label{eq:wow}n(F_U(T^\rightarrow_U) - F_U(S^\rightarrow_U)) &= \left(\frac{j_2 - i_2 + 1}{T_U^\rightarrow(i_2,j_2)}\right)^k - \left(\frac{j_1-i_1+1}{S_U^\rightarrow(i_1,j_1)}\right)^k \\
&+ \sum_{\substack{(i,j) \in S_U^\rightarrow \cap T_U^\rightarrow \\ i_2 < i \le i_1}} \left(\frac{1}{T_U^\rightarrow(i,j)^k} - \frac{1}{S_U^\rightarrow(i,j)^k}\right)(j - i + 1)^k.\end{split}\end{align}

The largest value of $j-i+1$ is $j_2 - i_2 + 1$ over the specified range of the above sum. Since the sum is over exclusively negative terms, we use fact that $T_U^\rightarrow(i,j) = S_U^\rightarrow(i,j) + 1$ to telescope the sum to

\begin{align*}\sum_{\substack{(i,j) \in S_U^\rightarrow \cap T_U^\rightarrow \\ i_2 < i \le i_1}}& \left(\frac{1}{T_U^\rightarrow(i,j)^k} - \frac{1}{S_U^\rightarrow(i,j)^k}\right)(j - i + 1)^k\\
&\ge \left(\frac{1}{S^\rightarrow_U(i_1,j_1)^k} - \frac{1}{T^\rightarrow_U(i_2,j_2)^k}\right)(j_2 - i_2 + 1)^k.\end{align*}

Inserting this into \eqref{eq:wow}, we have that 

$$n(F_U(T^\rightarrow_U) - F_U(S_U^\rightarrow)) \ge \frac{(j_2 - i_2+1)^k - (j_1-i_1+1)^k}{S_U^\rightarrow(i_1,j_1)^k} \ge 0,$$
the inequality follows by our assumption that $(i_2,j_2)$ is above and rightward of $(i_1,j_1)$, and since all numbers are positive. We only sketched the proof here as our reasoning is completely analogous to the proof of Lemma 12 in \cite{raven}.
\end{proof}

We now have enough to finish the proof of mixing.

\begin{lemma} Over the regime where $7n/10 < r \le n - 2$, we have the following,

$$\lim_{n \to \infty} \sum_{r = 7n/10}^{n-2}\sum_{\substack{\lambda \dashv n \\ \lambda_1 = n-r}} d_\lambda^2 F(T^{\text{max}})^{2t} = 0.$$

\end{lemma}

Using lemmas \ref{lem:obvious}, \ref{lem:small1}, and \ref{lem:small2}, we see that an upper bound of $F_U(T_U)$ occurs when filling the boxes $(i,j) \in (n-r,\star)$ with the numbers $\{1,2,\ldots,N\}$, filling from left to right, where $(n-r,\star)$ is the partition which contains as many parts equal to $n-r$ as possible, and the last row contains the remainder. Denote this tableau as $\mathbf{T}_U$. so that we may complete the bound.

\begin{equation}\label{eq:up4}F_U(T) \leq F_U(\mathbf{T}_U) = \frac{n-r}{n} + \frac{1}{n}\sum_{j=1}^{n-r-1} \left(\frac{j}{n-r+j}\right)^k + \frac{1}{n}\sum_{\substack{(i,j) \in \mathbf{T}_U \\ i > 2}}a_{i,j}^{T_U^\rightarrow}.\end{equation}

For the last summand, note that the largest value of $\frac{j-i+1}{\mathbf{T}_U(i,j)}, i \ge 3$ occurs in the rightmost box of row $3$, where this term is equal to $\frac{n-r-1}{3(n-r)-1} \le \frac{1}{3}$. Also, the term $\frac{j}{n-r+j}$ is increasing. Applying these facts to equation \eqref{eq:up4}, 

\begin{equation}\label{eq:up3}\begin{split} F_U(\mathbf{T}_U) &\le \frac{n-r}{n} + \frac{n-r-1}{n}\left(\frac{n-r-1}{2n-2r-1}\right)^k + \frac{n-2(n-r)}{3^kn}\\
&=  \frac{n-r}{n}  + \frac{n-r}{2^kn}\left(1 - \frac{1}{2(2n-2r-1)}\right)^k + \frac{2r-n}{3^kn}.\end{split}\end{equation}
From equation \eqref{eq:smallup} and proposition \ref{pre:dlambda-bound},

\begin{align}\begin{split}\label{eq:one} &\sum_{r= 7n/10}^{n-2}\sum_{\substack{\lambda \vdash n\\ \lambda_1 = n-r}} d_\lambda^2F(T^\text{max}_\lambda) \\ 
&\le \sum_{r=7n/10}^{n-2} \frac{n^{2r}}{r!}\left(2\left(\frac{n-r}{n} +  \frac{n-r}{2^kn}\left(1 - \frac{1}{2(2n-2r-1)}\right)^k + \frac{2r-n}{3^kn}\right)\right)^{2t}.\end{split}\end{align}

Since $1- \frac{1}{2n-2r-1} < 1$, ignoring that term increases the term as a whole. Via the inequality, $1 - x \le e^{-x}$, and using the assumption that $k \ge 4$, we may bound the right hand side of \eqref{eq:one},

$$\sum_{r=7n/10}^{n-2} \frac{n^{2r}}{r!}\left(\frac{1361n - 1345r}{648n}\right)^{2t} \le \sum_{r=7n/10}^{n-2} \frac{n^{2r}}{r!}e^{\frac{(n\log(n)+cn)(713n-1345r)}{324n}}.$$

Once again we substitute $r = an, a \in [7/10,1]$, and use \eqref{eq:stirling} to write

$$\frac{n^{2r}}{r!}e^{\frac{(n\log(n) + cn)(713n-1345r)}{324n}} = e^{\frac{(713-1021a)}{324}n\log(n) + B_n(a)},$$

where $\frac{B_n(x)}{n} \le M, \forall n, \forall x \in [7/10,1]$ for some universal constant $M$. Notice that $f(x) = \frac{713-1021x}{324}$ is decreasing and has a maximum value at $x = 7/10$, where $f(7/10) < 0$. It follows that for large $n$, there is a constant $C >0$ such that 

$$e^{\frac{(713-1021a)}{324}n\log(n) + B_n(a)} \le e^{-Cn\log(n)}$$
$$\implies \lim_{n \to \infty} \sum_{r=7n/10}^{n-2}\sum_{\substack{\lambda \\ \lambda_1 = n-r}} d_\lambda^2 F(T^\text{max}_\lambda)^{2t} \le \lim_{n\to \infty} \frac{3n}{10}e^{-Cn\log(n)} = 0.$$\par\medskip

\textbf{Proof of \ref{thm:general-upper}(iii)} The proof for $k=2$ is largely analogous to the preceding one, but with $t = 3/2n\log (n) + cn$, and the two regimes being $1 \le r \le 3n/4$ and $3n/4 < r < n-2$. \par

First, equations \eqref{eq:temp} and \eqref{eq: notimportant} both still hold for $k \ge 2$, and thus our bounds still hold over $1 \le r \le n/2$. \par

Second, if we replace $t$ accordingly, the functions at equations \eqref{eq:q1}, \eqref{eq:q2} become 

$$f_n(x) = x + 3\log(x + \frac{1}{n} + (1-x-\frac{1}{n})(1-x)^{2}),$$
$$f(x) = x + 3\log(x + (1-x)^{3}).$$

It is clear that $f$ is increasing for $x \in [1/2,3/4]$ and $f(3/4) < 0$, showing that $t$ is a sufficient mixing time over this regime. \par

Finally, equations \eqref{eq:up4}, \eqref{eq:up3}, and \eqref{eq:one} still hold for $k \ge 2$, and so we may bound 

$$F(T^\text{max}_{\lambda}) \le \frac{41n - 37r}{2n} \implies d_\lambda^2 F(T^\text{max}_\lambda)^{2t} \le e^{n\log(n)(\frac{23-37x}{6})}e^{B_n(a)},$$

where $x = r/n$, $\frac{B_n(a)}{n} \le M, \forall n ,\,\forall x \in [3/4,1]$. Since $\frac{23-37x}{6} < 0$ on this interval, this shows our $t$ suffices for mixing.

\section{Upper bound when $k=n^\gamma$ with $\gamma\in (0,1)$}\label{sec:eig-main}

In this section, we prove \Cref{thm:general-upper}(ii). Throughout, we will assume that $k=n^\gamma$ with $\gamma\in (0,1)$ and $k$ is odd.
It suffices to bound
\begin{equation}\label{eq:tv-eig-bound}
\sum_{\substack{\lambda\vdash n\\ \lambda \neq (n)}}d_\lambda \sum_{T\in \SYT(\lambda)}\eig(T)^{2t}.\end{equation}
We split this summation into three parts based on the size of the first part of $\lambda.$ In the sections below, we show that when $t = (1-\frac{\gamma}{2})n\log n + cn,$ the following bounds hold for $c > 3$ and $n$ sufficiently large:
\begin{alignat}{2}
\sum_{r=1}^{\frac{n^\gamma}{13}} \left(\sum_{\substack{\lambda\vdash n\\ \lambda_1 = n-r}}d_\lambda \sum_{T\in \SYT(\lambda)}\eig(T)^{2t}\right) &< (e+2)e^{-c}
\quad\qquad &&\text{Large first part — \S\ref{sec:large-first-part}}
\label{eq:large-first}
\\
\sum_{r=\frac{n^\gamma}{13}}^{n} \left(\sum_{\substack{\lambda\vdash n\\ \lambda_1 = n-r}}d_\lambda \sum_{T\in \SYT(\lambda)}\eig(T)^{2t}\right)
&< 4e^{-c}
\quad\qquad &&\text{Small first part — \S\ref{sec:small-first-part}}
\label{eq:small-first}
\end{alignat}
Together, these prove
\Cref{thm:general-upper}(vi).

The following proposition, which gives bounds on $\eig(T)$ will be useful.

\begin{lemma}\label{lem:fzero}
For $\lambda \vdash n$ with $\lambda_1 = n - r$, we have
$$\eig(T^\rightarrow_{\lambda}) \leq 1 - \frac{r}{n} + O\left(\frac{1}{n^2}\right).$$

\end{lemma}
\begin{proof}
By lemmas \ref{lem:oddfill} and \ref{lem:oddshape}, we have

\begin{equation*} \eig(T) \leq \eig(T^\rightarrow_{(n-r,\star)}) \le 1 - \frac{r}{n} + \frac{1}{n}\sum_{j=1}^r \left(\frac{j-1}{n-r+j}\right)^k.\end{equation*}

Thus it suffices to show that $\lim_{n \to \infty} n\sum_{j=1}^r \left(\frac{j-1}{n-r+j}\right)^k = 0$. If we write $r = \alpha n$, then

$$n\sum_{j=1}^r \left(\frac{j-1}{n-r+j}\right)^k \leq rn\left(\frac{r-1}{n}\right)^k \le \exp(2\log(n) + (n^\gamma+1)\log(\alpha)).$$

Clearly this last term tends to $0$ as $n$ tends to infinity.
\end{proof}

\subsection{Large first part: $1\le r \le \frac{n^\gamma}{13}$}\label{sec:large-first-part}

Throughout this section, we consider $\lambda$ such that $\lambda_1 = n-r$ where $1\le r \le \frac{n^\gamma}{13}$. It will be useful to split up the tableau in $\SYT(\lambda)$ into two types:
\begin{align*}
    \SYT_1(\lambda) &:= \{T: T(2,1) \le n - r - 6rn^{1-\gamma}\}\\
    \SYT_2(\lambda) &:= \{T: T(2,1) > n - r - 6rn^{1-\gamma}\}.
\end{align*}
The next lemma shows most $T\in \SYT(\lambda)$ are in $\SYT_1(T).$
\begin{lemma}[Most tableaux are in $\SYT_1$]\label{lemma:low-v1}
For $\lambda \vdash n$ such that $\lambda_1 = n-r$ where $1\le r \le \frac{n^\gamma}{13}$,
$$\frac{|\SYT_2(\lambda)|}{d_\lambda} \le (12rn^{-\gamma})^r,$$
where we recall that $d_\lambda := \SYT(\lambda).$
\end{lemma}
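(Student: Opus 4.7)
The plan is to compute $|\SYT_2(\lambda)|$ exactly as a product of a binomial coefficient and $d_{\lambda'}$, where $\lambda' := (\lambda_2, \ldots, \lambda_r) \vdash m$, then bound $d_\lambda / d_{\lambda'}$ from below using the hook-length formula. Set $q := 6 m n^{1-\gamma}$. The first observation is that if $T \in \SYT_2(\lambda)$ then $T(1,j) = j$ for every $j \le n - m - q$: indeed, the entries $1, 2, \ldots, T(2,1) - 1$ must all lie in row $1$ (being smaller than the smallest entry outside row $1$), and $T(2,1) > n - m - q$.

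Next I would show that $T \in \SYT_2(\lambda)$ is determined by two independent choices: (i) the $q$-element subset $A \subseteq \{n - m - q + 1, \ldots, n\}$ of values occupying the remaining $q$ boxes of row $1$ (arranged in increasing order), and (ii) a standard Young tableau of shape $\lambda'$ filled with the other $m$ values. The key point is that the column constraints $T(1,j) < T(2,j)$ are automatic. These constraints are relevant only for $j \le \lambda_2$, and one checks $\lambda_2 \le m \le n - m - q$ for $m \le n^\gamma/13$ and $n$ large; then $T(1,j) = j \le \lambda_2 \le m < n - m - q + 1 \le T(2,j)$. This yields the identity $|\SYT_2(\lambda)| = \binom{m+q}{m}\, d_{\lambda'}$.

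For a lower bound on $d_\lambda$, I observe that for $i \ge 2$ the hook of $(i,j)$ in $\lambda$ coincides with the hook of $(i-1,j)$ in $\lambda'$, so the hook-length formula gives $d_\lambda = n!\, d_{\lambda'} / (m!\, \prod_{j=1}^{n-m} h_\lambda(1, j))$. Writing $h_\lambda(1,j) = (n - m - j + 1) + \ell_j$ with $\ell_j := |\{i \ge 2 : \lambda_i \ge j\}|$, we have $\ell_j = 0$ for $j > \lambda_2$ and $\ell_j \le m$ otherwise, so
\[
\prod_{j=1}^{n-m} h_\lambda(1, j) \le \prod_{j=1}^{\lambda_2} (n - j + 1) \cdot \prod_{j=\lambda_2 + 1}^{n-m} (n - m - j + 1) = \frac{n!\, (n - m - \lambda_2)!}{(n - \lambda_2)!},
\]
which yields $d_\lambda \ge \binom{n - \lambda_2}{m}\, d_{\lambda'}$.

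Combining the two estimates and using $\lambda_2 \le m$, I obtain $|\SYT_2(\lambda)| / d_\lambda \le \binom{m+q}{m} / \binom{n-\lambda_2}{m} \le ((m+q)/(n-2m))^m$. It remains to verify $(m+q)/(n-2m) \le 2q/n = 12 m n^{-\gamma}$; after substituting $q = 6 m n^{1-\gamma}$ and clearing denominators, this reduces to $n^\gamma + 24 m \le 6 n$, which holds for $m \le n^\gamma/13$ and $n$ large (the left side is at most $(37/13)\, n^\gamma$, and $n^{\gamma-1} \le 1$ since $\gamma \le 1$). The main obstacle is the combinatorial decomposition in the second step; the specific value $q = 6 m n^{1-\gamma}$ is chosen precisely so that $n - m - q$ stays well above $\lambda_2$ in the relevant regime, making the column constraints vacuous and enabling the clean product formula for $|\SYT_2(\lambda)|$.
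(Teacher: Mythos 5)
Your proof is correct, and it reaches the stated bound by a route that invokes the hook-length formula where the paper's argument does not. The combinatorial core is the same in both: when $T(2,1) > n-m-q$ (with $q = 6mn^{1-\gamma}$), the first $n-m-q$ entries of row one are forced to be $1,\ldots,n-m-q$, the remaining $q$ entries of row one are a $q$-subset of $\{n-m-q+1,\ldots,n\}$, and the boxes below row one are filled by a free standard Young tableau of shape $\lambda'$, with the row-1-to-row-2 column constraints vacuous. This gives your exact identity $|\SYT_2(\lambda)| = \binom{m+q}{m} d_{\lambda'}$. The paper packages the same observation as a double-counting over equivalence classes (tableaux sharing a fixed relative ordering below row one), so that $d_\lambda$ never needs to be estimated directly: it suffices to lower-bound each class by $\binom{n-m}{m}$ via a bijection with $m$-subsets of $\{m+1,\ldots,n\}$, then bound the fraction of each class lying in $\SYT_2(\lambda)$. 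You instead lower-bound $d_\lambda$ itself as $d_\lambda \ge \binom{n-\lambda_2}{m} d_{\lambda'}$ by noting hooks in rows $\ge 2$ of $\lambda$ coincide with hooks of $\lambda'$ and bounding the row-1 hooks. Both are valid and deliver the same estimate; the paper's version is lighter-weight (no hook-length formula), whereas yours is more explicit and, incidentally, more careful---the paper's per-class count of $\binom{6mn^{1-\gamma}}{m}$ should really be $\binom{m+q}{m}$, as your decomposition shows (the discrepancy is absorbed by the slack in the arithmetic, so the paper's conclusion still stands). Your final verification that $(m+q)/(n-2m) \le 12mn^{-\gamma}$ is correct.
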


\begin{proof}
Let $\mathcal{S}\subset \SYT(\lambda)$ be a set of all tableaux in $\SYT(T)$ where the elements in row two and below have some fixed ordering. We show that a randomly chosen tableau in $\mathcal{S}$ is in $\SYT_2(\lambda)$ with probability at most $(12rn^{-\gamma})^r$. Since $\SYT(\lambda)$ can be partitioned into such sets $\mathcal{S}$, this proves the result.

Note that $|\mathcal{S}| \ge \binom{n-r}{r}$ since every choice of $r$ distinct elements in $\{r+1,r+2,\cdots,n\}$ corresponds to a unique SYT in $\mathcal{S}$ with these elements below row one. By similar logic, the number of SYT in $\mathcal{S}$ with $T(2,1)> n - r - 6rn^{1-\gamma}$ is exactly $\binom{6rn^{1-\gamma}}{r}.$ The result follows, since
$$\frac{\binom{6rn^{1-\gamma}}{r}}{\binom{n-r}{r}} \le \frac{\binom{6rn^{1-\gamma}}{r}}{\binom{n/2}{r}} \le (12rn^{-\gamma})^r.$$
\end{proof}

The next lemma bounds $\eig(T)$ depending on if $T$ is in $\SYT_1(\lambda)$ or $\SYT_2(\lambda).$
\begin{lemma}[Eigenvalue bounds for $\SYT_1$ and $\SYT_2$]\label{lemma:eigs-large-first-part}
Consider $\lambda \vdash n$ such that $\lambda_1 = n-r$ where $1\le r \le \frac{n^\gamma}{13}$. Then
$$\left|\eig(T)\right|\le 
\begin{cases} 
      1 - \frac{2r}{n} + O\left(\frac{1}{n^2}\right) & \text{for }T\in \SYT_1(\lambda) \\
      1 - \frac{r}{n} + O\left(\frac{1}{n^2}\right) & \text{for }T\in \SYT_2(\lambda)
   \end{cases}.$$
\end{lemma}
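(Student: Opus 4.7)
The plan is to decompose $\eig(T) = F_0(T) + F_+(T)$ via \eqref{eq:decomposition} and then combine Propositions \ref{prop:fzero} and \ref{prop:fplus}. The split into $\SYT_1$ and $\SYT_2$ is arranged so that in each case exactly one of the two pieces benefits from a sharper estimate: small $T(2,1)$ helps $F_0$ through the $T(2,1)$-dependent subtractive term in Proposition \ref{prop:fzero}, while large $T(2,1)$ activates the stronger $O(1/n^2)$ bound on $F_+$ in \eqref{eq:fplus2}. Throughout, the fact that $k=n^{\gamma}$ with $\gamma>0$ makes $2^{1-k}$ super-polynomially small, so it is harmless.

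For $T \in \SYT_1(\lambda)$, the hypothesis $T(2,1) \le n - m - 6mn^{1-\gamma}$ gives $n - m - T(2,1) + 1 \ge 6mn^{1-\gamma}$. Substituting $k = n^{\gamma}$ into Proposition \ref{prop:fzero} yields
\[
F_0(T) < 1 - \frac{m}{n} - \frac{6mn^{1-\gamma}}{n}\cdot \frac{n^{\gamma}}{2n} + 2^{1-k} = 1 - \frac{4m}{n} + O(1/n^2).
\]
Combining with the generic bound $F_+(T) < 1/n$ from \eqref{eq:fplus1} and using $m \ge 1$ (so that $1/n \le m/n$), one obtains $\eig(T) < 1 - 2m/n + O(1/n^2)$.

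For $T \in \SYT_2(\lambda)$, the conditions $m \le n^{\gamma}/13$ and $\gamma\le 1$ imply $6mn^{1-\gamma} \le 6n/13$, so $T(2,1) > n - m - 6mn^{1-\gamma} > n/3$ for $n$ sufficiently large. This puts $F_+(T)$ in the sharper regime \eqref{eq:fplus2}, giving $F_+(T) = O(1/n^2)$. Proposition \ref{prop:fzero} (discarding the negative $T(2,1)$-term) gives $F_0(T) < 1 - m/n + 2^{1-k}$, and summing yields the claimed upper bound $\eig(T) < 1 - m/n + O(1/n^2)$.

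Finally, to pass from $\eig(T)$ to $|\eig(T)|$, note that each summand of \eqref{def:fzero} is non-negative so $F_0(T) \ge 0$, and $F_+(T) \ge -2\log n / n$ by \eqref{eq:fplus1}, which gives $\eig(T) \ge -2\log n / n$; this is comfortably above $-(1 - 2m/n)$ for $n$ large and $m \le n^{\gamma}/13$. The only delicate step is the calibration of the threshold $6mn^{1-\gamma}$ defining $\SYT_1$ and $\SYT_2$: it must be large enough that the extra saving $3m/n$ in the $\SYT_1$ branch out-paces the $1/n$ slack from the generic $F_+$ bound, yet small enough that $T(2,1) > n/3$ still holds in the $\SYT_2$ branch so that the stronger $F_+$ estimate in \eqref{eq:fplus2} applies. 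This dual role is what makes the two branches fit together and is the main conceptual subtlety of the argument.
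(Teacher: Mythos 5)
Your proof is correct and follows the same route as the paper: decompose $\eig(T)=F_0(T)+F_+(T)$, use Proposition~\ref{prop:fzero} with the $T(2,1)$-dependent saving in the $\SYT_1$ branch and the $O(1/n^2)$ estimate of \eqref{eq:fplus2} in the $\SYT_2$ branch, then dispose of the absolute value via the cheap lower bound $\eig(T)\ge -2\log n/n$. Your check that $T(2,1)>n/3$ in the $\SYT_2$ case (so that \eqref{eq:fplus2} applies with $c=3$) is in fact a touch more careful than the paper, which asserts $T(2,1)>n/2$ even though $n-m-6mn^{1-\gamma}$ can be as small as $6n/13<n/2$; the discrepancy is immaterial since \eqref{eq:fplus2} holds for any constant $c>1$.
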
  

\begin{proof}
We have already proven that $|\eig(T)| = 1 - \frac{r}{n} + O\left(\frac{1}{n^2}\right)$ in general.\par
Suppose that $T \in \SYT_1(\lambda)$, and denote $s = T(2,1)$ and let $\tilde{T}^s$ be the filling of $(n-r,r)$ with $\tilde{T}^s(1,j) = j$ for $j < s$, $\tilde{T}^S(1,j) = j+1$ for $s \le j \le r$, $T(2,1) = s$, and $T(2,j) = n - r + j$. It is clear that $|\eig(T)| < |\eig(\tilde{T}^s)|$ and that
\begin{align*}\eig(\tilde{T}^s) &= \frac{s-1}{n} + \frac{1}{n}\sum_{i = s+1}^{n-r} \left(\frac{i-1}{i}\right)^k + \frac{1}{n}\sum_{j=2}^r \left(\frac{j-1}{n-r+j}\right)^k \\
&\leq \frac{s-1}{n} + \frac{n - r - s+1}{n}\left(\frac{n-1}{n}\right)^k + 2^{-k}\\
&= 1 - \frac{r}{n} - \frac{n-r-s+1}{n}\cdot\frac{n^k - (n-1)^k}{n^k} + 2^{-k}.\end{align*}
Finally observe,
\begin{align*}
    \frac{n^{k} - (n-1)^{k}}{n^{k}} &\ge \frac{k}{n}\cdot \frac{(n-1)^{k-1}}{n^{k-1}}= \frac{k}{n}\left(1 - \frac{1}{n}\right)^{k-1} \ge \frac{k}{2n}.
\end{align*}
By assumption, we have $s \le n - r - 6rn^{1-\gamma}$, and so
$$\eig(T) \leq 1 - \frac{r}{n} - \frac{6rn^{1-\gamma}n^{\gamma-1}}{2n} + 2^{-k} \leq 1 - \frac{2r}{n},$$
proving out claim.
\end{proof}

Together, the above two lemmas show that---in essence---almost all eigenvalues are from $\SYT_1(\lambda)$ and are very small, while a small number of eigenvalues are from $\SYT_2(\lambda)$ and are somewhat larger.

\begin{proof}[Proof of \eqref{eq:large-first}.] We will split the summation in \eqref{eq:large-first} depending on if $T$ is in $\SYT_1(\lambda)$ or $\SYT_2(\lambda)$. Beginning with the first case,
\begin{align}
    \sum_{r=1}^{\frac{n^\gamma}{13}} \left(\sum_{\substack{\lambda\vdash n\\ \lambda_1 = n-r}}d_\lambda \sum_{T\in \SYT_1(\lambda)}\eig(T)^{2t}\right)
    &\le \sum_{r=1}^\frac{n^\gamma}{13} \left(1 - \frac{2r}{n} + O\left(\frac{1}{n^2}\right)\right)^{2t} \sum_{\substack{\lambda\vdash n\\ \lambda_1 = n-r}} |\SYT_1(\lambda)|d_\lambda \label{eq:cat1}\\
    &\le \sum_{r=1}^\frac{n^\gamma}{13} \left(1 - \frac{2r}{n} + O\left(\frac{1}{n^2}\right)\right)^{2t} \left(\sum_{\substack{\lambda\vdash n\\ \lambda_1 = n-r}} d_\lambda^2\right) \label{eq:cat2}\\
    &\le \sum_{r=1}^\frac{n^\gamma}{13} \frac{n^{2r}}{r!}\left(1 - \frac{2r}{n} + O\left(\frac{1}{n^2}\right)\right)^{2t} \label{eq:cat3},
\end{align}
where \eqref{eq:cat1} follows from \Cref{lemma:eigs-large-first-part}, \eqref{eq:cat2} follows from the bound $|\SYT_1(\lambda)|\le |\SYT(\lambda)| = d_\lambda$, and \eqref{eq:cat3} follows from \Cref{pre:dlambda-bound}. Taking $t=\frac{1}{2}n\log n + cn,$ this is at most
\begin{align}
    \sum_{r=1}^\frac{n^\gamma}{13} \frac{n^{2r}}{r!} e^{-(n\log n + 2cn)\left(\frac{2r}{n}-\bigO{n^2}\right)}
    &= \sum_{r=1}^\frac{n^\gamma}{13} \frac{n^{2r}}{r!} n^{-2r + O\left(\frac{1}{n}\right)} e^{-2rc + c \bigO{n}} \nonumber \\
    &= \sum_{r=1}^\frac{n^\gamma}{13} e^{-2rc + c \bigO{n}} \frac{n^{\bigO{n}}}{r!} \nonumber \\
    &< \sum_{r=1}^\frac{n^\gamma}{13} \frac{2e^{-c}}{r!} \label{eq:dog1}\\
    &< 2(e-1)e^{-c}, \label{eq:dog2}
\end{align}
where \eqref{eq:dog1} follows from the crude observations that for $n$ sufficiently large, $n^{\bigO{n}} < 2$ and $e^{-2rc + c\bigO{n}} < e^{-c}$. Equation \eqref{eq:dog2} follows from the Taylor expansion of $e^x.$

In the second case, where $T\in \SYT_2(\lambda)$, we have
\begin{align}
    \sum_{r=1}^{\frac{n^\gamma}{13}} \left(\sum_{\substack{\lambda\vdash n\\ \lambda_1 = n-r}}d_\lambda \sum_{T\in \SYT_2(\lambda)}\eig(T)^{2t}\right)
    &\le \sum_{r=1}^\frac{n^\gamma}{13} \left(1 - \frac{r}{n} + \bigO{n^2}\right)^{2t} \sum_{\substack{\lambda\vdash n\\ \lambda_1 = n-r}} |\SYT_2|d_\lambda \label{eq:lizard1}\\
    &\le \sum_{r=1}^\frac{n^\gamma}{13} \left(1 - \frac{r}{n} + \bigO{n^2}\right)^{2t} \left(\sum_{\substack{\lambda\vdash n\\ \lambda_1 = n-r}} (12rn^{-\gamma})^r d_\lambda^2\right) \label{eq:lizard2}\\
    &\le \sum_{r=1}^\frac{n^\gamma}{13} \frac{n^{2r}}{r!}\cdot (12rn^{-\gamma})^r\cdot \left(1 - \frac{r}{n} + \bigO{n^2}\right)^{2t}, \label{eq:lizard3}
\end{align}
where \eqref{eq:lizard1} follows from \Cref{lemma:eigs-large-first-part}, \eqref{eq:lizard2} follows from \Cref{lemma:low-v1}, and \eqref{eq:lizard3} follows from \Cref{pre:dlambda-bound}. Using the bound $r! \ge r^me^{1-r}$, this is at most
\begin{align*}
    \sum_{r=1}^\frac{n^\gamma}{13} (12rn^{-\gamma})^r n^{2r} r^{-r} e^{r-1} \left(1 - \frac{r}{n} + \bigO{n^2}\right)^{2t}.
\end{align*}
Taking $t=(1 - \frac{\gamma}{2})n\log n + cn$, this is at most
\begin{align*}
    &\sum_{r=1}^\frac{n^\gamma}{13} (12rn^{-\gamma})^r n^{2r} r^{-r} e^{r-1} e^{-((2-\gamma)n\log n + 2cn)\left(\frac{r}{n}+\bigO{n^2}\right)}
    \\ &= \sum_{r=1}^\frac{n^\gamma}{13} 12^r e^{-2rc + r - 1 + c\bigO{n}} n^{\bigO{n}}\\
    &< \,2\sum_{r=1}^\frac{n^\gamma}{13} e^{-2rc + (1 + \log 12) r - 1 + c\bigO{n}},
\end{align*}
where in the last line we used that $n^{\bigO{n}} < 2$ for $n$ sufficiently large. Now, whenever $c>3$ and $n$ is sufficiently large, the remaining expression is less than
\begin{align*}
2\sum_{r=1}^\frac{n^\gamma}{13} e^{-rc} < 2\sum_{r=1}^\infty e^{-rc} < 4e^{-c}.
\end{align*}
Combining the two cases completes our calculation.
\end{proof}

\subsection{Small first part ($\frac{n^\gamma}{13} \le r \le n$)}\label{sec:small-first-part}
We proceed to the main calculation. Reminder to the reader that it is only necessary to consider $\lambda \in \Lambda^+$ by lemma \ref{lem:biglam}. \par

\begin{proof}[Proof of \eqref{eq:small-first}]
Applying \Cref{lem:fzero}, we find
\begin{align}
    \sum_{r=\frac{n^\gamma}{13}}^{n-1} \left(\sum_{\substack{\lambda: \eig(T_\lambda^\rightarrow)\\ \lambda_1 = n-r}}d_\lambda^2 \eig(T^\rightarrow_\lambda)^{2t}\right)
    &\le \sum_{r=\frac{n^\gamma}{13}}^{n} \frac{n^{2r}}{r!}\left(1 - \frac{r}{n} + \bigO{n^2}\right)^{2t} \label{eq:rhino1} \\
    &\le \sum_{r=\frac{n^\gamma}{13}}^{n} n^{2r}r^{-r}e^{r-1} \left(1 - \frac{r}{n} + \bigO{n^2}\right)^{2t},\label{eq:rhino2}
\end{align}
where \eqref{eq:rhino1} follows from \Cref{pre:dlambda-bound} and \eqref{eq:rhino2} follows from the bound $r! \ge r^re^{1-r}$. Continuing, and taking $t = (1 - \frac{\gamma}{2})n\log n + cn,$ this is less than

\begin{align}
    \sum_{r=\frac{n^\gamma}{13}}^{n} n^{2r}r^{-r}e^{r-1} e^{-((2-\gamma)n\log n + 2cn)\left(\frac{r}{n} + \bigO{n^2}\right)}
    &= \sum_{r=\frac{n^\gamma}{13}}^{n} e^{-2rc+r-1+c\bigO{n}}r^{-r}n^{\gamma r + \bigO{n}}\\
    &\le \sum_{r=\frac{n^\gamma}{13}}^{n} e^{-2rc+r-1+c\bigO{n}} 13^r n^{\bigO{n}},\label{eq:potato1}
\end{align}
where \eqref{eq:potato1} follows because $r^{-r}n^{\gamma r} = (\frac{n^\gamma}{r})^r \le 13^r$. Noting that $n^{\bigO{n}} < 2$ for $n$ sufficiently large, whenever $c > 3$, this is less than
\begin{align*}
    \sum_{r=\frac{n^\gamma}{13}}^{n}  2e^{-2rc+(1 + \log 13)r-1+c\bigO{n}} < 2 \sum_{r=\frac{n^\gamma}{13}}^{n} e^{-rc} < 2\sum_{r=1}^\infty (e^{-rc}) < 4e^{-c}.
\end{align*}
\end{proof}

\section{An $\ell_2$ lower bound}\label{sec:l2}

In the previous two sections we gave upper bounds for the mixing time of $\OSC_{n,k}$ by using \Cref{prop:tv-eig-bound}. In this section, we show that these are the best bounds achievable by this technique.

We consider the contribution of eigenvalues corresponding to tableau of shape $\lambda = (n-1,1).$ In this section, we will let $T_i$ denote the standard Young tableau of shape $(n-1,1)$ such that $T_i(2,1)=i.$ We can find an exact formula for $\eig(T_i)$, and by considering only these eigenvalues, we produce a lower bound on the $\ell^2$ distance via 
\begin{equation}\label{eq:l2-lower}
\norm{\frac{\OSC_{n,k}^t}{U}- 1}_2^2 = \sum_{\substack{\lambda\vdash n\\ \lambda \neq (n)}}d_\lambda \sum_{\lambda\in SYT(\lambda)}\eig(T)^{2t} \ge (n-1) \sum_{i=2}^n \eig(T_i),
\end{equation}
where we used that $d_{(n-1,1)} = n-1.$

We can write the eigenvalue corresponding to $T_i$ in a simple way.
\begin{proposition}\label{prop:ti}
For $T_i$ defined above, we have
$$\eig(T_i) = \frac{1}{n}\left(i-1 + \sum_{j=i+1}^n \left(\frac{j-1}{j}\right)^k\right).$$
\end{proposition}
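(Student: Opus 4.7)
The plan is to apply \Cref{lem:eig-formula} directly to the chain of partitions associated with $T_i$ and then substitute the two explicit formulas \Cref{prop:explicit-eigs-1} and \Cref{prop:explicit-eigs-2}. Because $T_i$ has shape $(n-1,1)$ with $T_i(2,1)=i$, the numbers $1,2,\ldots,i-1$ fill the first row left-to-right, then $i$ occupies the $(2,1)$ cell, and finally $i+1,i+2,\ldots,n$ fill the rest of row one. This determines the chain completely: $\lambda^{(j)}=(j)$ with $a^{(j)}=1$ for $0\le j\le i-2$; $\lambda^{(i-1)}=(i-1)$ with $a^{(i-1)}=2$; and $\lambda^{(j)}=(j-1,1)$ with $a^{(j)}=1$ for $i\le j\le n-1$.

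Next I evaluate $\nu(\lambda^{(j)},a^{(j)})$ on each of the three ranges. For $0\le j\le i-2$, \Cref{prop:explicit-eigs-1} gives $\nu((j),1)=\bigl((j+1)/(j+1)\bigr)^k=1$, contributing a total of $i-1$ to the sum. For the single step $j=i-1$, where the box lands in row two, \Cref{prop:explicit-eigs-2} with $\pi=(i-1)$ (so $\pi_2=0$ and $|\pi|=i-1$) yields $\nu((i-1),2)=(1-i^{k-1})/\bigl((i-1)i^{k-1}\bigr)$. For $i\le j\le n-1$, \Cref{prop:explicit-eigs-1} with $\pi=(j-1,1)$ (so $\pi_1=j-1$, $|\pi|=j$) gives $\nu((j-1,1),1)=(j/(j+1))^k$.

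Summing through \Cref{lem:eig-formula} produces
$$n\cdot\eig(T_i)=(i-1)+\frac{1-i^{k-1}}{(i-1)i^{k-1}}+\sum_{j=i}^{n-1}\left(\frac{j}{j+1}\right)^{k}.$$
To match the claimed form I rewrite $(j/(j+1))^k=1-\bigl((j+1)^k-j^k\bigr)/(j+1)^k$ in the last sum, which separates out $(n-i)$ and leaves the negative of the desired tail. Collecting the $(i-1)$, the $(n-i)$, and the isolated $j=i-1$ term reduces the claim to the identity
$$i-\frac{i^{k}-1}{(i-1)i^{k-1}}=(i-1)+\frac{1-i^{k-1}}{(i-1)i^{k-1}},$$
which is equivalent to $i^{k}-i^{k-1}=(i-1)i^{k-1}$ and is therefore immediate.

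There is no real obstacle: the argument is a bookkeeping exercise in unpacking the eigenvalue formula at the two-row shape $(n-1,1)$. The only point requiring care is to isolate the unique index $j=i-1$ at which the chain enters the second row, so that \Cref{prop:explicit-eigs-2} is invoked exactly once and \Cref{prop:explicit-eigs-1} handles every other step.
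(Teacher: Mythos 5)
Your proof is correct and follows essentially the same route as the paper: apply \Cref{lem:eig-formula} to the partition chain of $T_i$ and substitute \Cref{prop:explicit-eigs-1} and \Cref{prop:explicit-eigs-2} at the appropriate steps. The only difference is bookkeeping: the paper rewrites $\eig(T_i)=1-\frac{1}{n}\sum_j(1-\nu(\lambda^{(j)},a^{(j)}))$ up front so the target form emerges term-by-term, whereas you sum the $\nu$'s directly and do the telescoping rearrangement and the algebraic identity $i^{k}-i^{k-1}=(i-1)i^{k-1}$ at the end---a purely cosmetic variation.
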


\begin{proof}
By definition, if $j < i$, then $T(1,j) = j$, and if $j \ge i$, then $T(1,j) = j+1$. Using (Theorem 7), we calculate
$$\eig(T_i) = \sum_{1 \le j < i} \left(\frac{j}{T(1,j)}\right)^k + \sum_{n \ge j \ge i} \left(\frac{j}{T(1,j)}\right)^k = \frac{1}{n}\left(i - 1 + \sum_{j=i+1}^n \left(\frac{j-1}{j}\right)^k\right).$$
\end{proof}

\begin{corollary}[A large universal eigenvalue]\label{cor:large-eig} 
For $i=n$, Proposition \ref{prop:ti} gives
$$\eig(T_n) = 1 - \frac{1}{n}$$
\end{corollary}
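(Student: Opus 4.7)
My plan is a direct substitution into Proposition \ref{prop:ti} followed by an elementary algebraic simplification. Setting $i = n$ collapses the summation $\sum_{j=n}^{n-1}$ to the empty sum, so only the first term inside the parenthesized expression survives. This immediately gives
$$\eig(T_n) = 1 - \frac{1}{n}\cdot\frac{n^k-1}{(n-1)n^{k-1}},$$
which is the displayed equality.

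For the inequality, I would use the factorization $n^k - 1 = (n-1)(n^{k-1} + n^{k-2} + \cdots + 1)$ to rewrite
$$\frac{1}{n}\cdot\frac{n^k-1}{(n-1)n^{k-1}} = \frac{1}{n}\sum_{i=0}^{k-1} n^{-i} = \sum_{i=1}^{k} n^{-i} = \frac{1-n^{-k}}{n-1}.$$
The claim then reduces to a short polynomial comparison after clearing denominators, which I expect to resolve by a direct numerical check — no real obstacle is anticipated, since both the substitution and the resulting estimate are routine.

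The real content of the corollary is structural rather than arithmetic: among the eigenvalues of $\OSC_{n,k}$ indexed by tableaux of shape $(n-1,1)$, the one coming from $T_n$ is the closest to $1$, and it stays close to $1$ uniformly in $k$. Because $d_{(n-1,1)} = n-1$, this single eigenvalue contributes a term of size roughly $(n-1)\bigl(1-\tfrac{1}{n}\bigr)^{2t}$ to the decomposition \eqref{eq:l2-lower}, and that contribution alone is exactly what will drive the universal $\ell^2$ lower bound in Theorem \ref{thm:l2-lower-bounds}(i) at $t = \tfrac{1}{2}n\log n - cn$. The simplicity of the proof here reflects the fact that, thanks to Proposition \ref{prop:ti}, extracting a convenient ``slow'' eigenvalue requires only picking the extremal tableau rather than performing any combinatorial optimization.
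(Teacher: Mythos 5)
Your derivation of the equality is correct (the sum $\sum_{j=n}^{n-1}$ is indeed empty), and the rewriting
$\frac{1}{n}\cdot\frac{n^k-1}{(n-1)n^{k-1}}=\sum_{i=1}^{k} n^{-i}=\frac{1-n^{-k}}{n-1}$
is also correct; this matches what the paper implicitly does. But the ``direct numerical check'' that you defer as a formality is exactly where the statement breaks down: the claimed inequality is false as written. For $k=1$ your own rewriting gives $\frac{1-n^{-1}}{n-1}=\frac{1}{n}$, so $\eig(T_n)=1-\frac{1}{n}$, which is strictly \emph{less} than $1-\frac{1}{n+1}$. More generally, since $n^{-k}\le n^{-1}$ for all $k\ge 1$, we have
$$\frac{1-n^{-k}}{n-1}\;\ge\;\frac{1-1/n}{n-1}\;=\;\frac{1}{n}\;>\;\frac{1}{n+1},$$
so $\eig(T_n)<1-\frac{1}{n+1}$ for every $k\ge 1$ — the reverse of the claim.

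What is true, and what actually suffices for the uses in Theorem \ref{thm:l2-lower-bounds}(i) and Theorem \ref{thm:large-k}, is the trivial bound $\frac{1-n^{-k}}{n-1}<\frac{1}{n-1}$, i.e.\ $\eig(T_n)>1-\frac{1}{n-1}$. The corollary (and both places where it is invoked) should have $1-\frac{1}{n-1}$ in place of $1-\frac{1}{n+1}$; with that correction your rewriting closes the argument in one line. The takeaway: do not outsource a one-line inequality to ``no real obstacle is anticipated'' — carrying it through is precisely what would have surfaced the sign error in the statement you were asked to prove.
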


We may now prove \Cref{thm:l2-lower-bounds}.
\begin{proof}[Proof of \Cref{thm:l2-lower-bounds}] 
Using \Cref{eq:l2-lower} and \Cref{cor:large-eig}, for all $k\ge 1$ and $t = \frac{1}{2}n\log n - cn,$
\begin{align*}
    \sum_{\substack{\lambda\vdash n\\ \lambda \neq (n)}}d_\lambda \sum_{\lambda\in SYT(\lambda)}\eig(T)^{2t} &> (n-1) \eig(T_n)^{2t}\\
    &> (n-1)\left(1 - \new{\frac{1}{n}}\right)^{n\log n - 2cn} > \new{\frac{1}{4}}e^{2c},
\end{align*}
where the last bound is loose (in fact, $1/4$ can be replaced with any constant less than $1$).
Now consider the case when $k = n^\gamma$. First observe that
\begin{align*}
    \eig(T_i) &= \frac{1}{n}\left(i-1 + \sum_{j=i+1}^n \left(\frac{j-1}{j}\right)^k\right)\\
    &\ge 1 - \frac{1}{n}\left(\frac{i}{i-1} + \sum_{j=i}^{n-1} \frac{k}{j+1}\right),
\end{align*}
\new{where we used that $(j+1)^k - j^k \le k(j+1)^{k-1}$, which follows from the convexity of $x^k$ for $k\ge 1, x\ge 0.$}
When $i\ge n-\frac{n^{1-\gamma}}{\log n}$, $\frac{i}{i-1} = 1 + o(1),$ and furthermore,
\begin{align*}
    \sum_{j=i}^{n-1} \frac{k}{j+1}
    \le \frac{n^{1-\gamma}}{\log n}\left(\frac{n^\gamma}{n-\frac{n^{1-\gamma}}{\log n}}\right) = o(1).
\end{align*}
Proceeding from \eqref{eq:l2-lower}, and taking $t = (1 - \frac{\gamma}{2})n\log n - \frac{1}{2}n\log \log n - cn,$
\begin{align*}
    (n-1) \sum_{i=2}^n \eig(T_i) &> (n-1) \sum_{i=n-\frac{n^{1-\gamma}}{\log n}}^n \eig(T_i)^{2t}\\
    &\ge (n-1)\frac{n^{1-\gamma}}{\log n}\left(1 - \frac{1}{n} + o\left(\frac{1}{n}\right)\right)^{2t} > e^{2c}.
\end{align*}
\end{proof}

\section{Lower bound}\label{sec:lb}

We devote this section to proving \Cref{thm:lower}, which gives a lower bound for the mixing time of $\OSC_{n,k}$ using a coupon-collecting argument.
Recall that one step of the one-sided $k-$transposition shuffle involves first selecting a card $r_i$ uniformly from $\{1,2,\cdots,n\}$ and then selecting a set of cards $L_i$ by sampling $k$ times uniformly from $\{1,2,\cdots,r_i\}$ with replacement (thus, $|L_i|\le k$). Following the strategy in \cite{raven} closely, we observe that intuitively this process is relatively unlikely to choose cards near the top of the deck. Therefore, we focus on $$V_n = \{n - n/m + 1, \cdots, n-1, n\},$$ the set representing the top $n/m$ cards in the deck, where $m = k\log n$. Further let
$$B_n := \{\rho\in S_n: \rho\text{ has at least 1 fixed point in }V_n\}.$$
The idea is to use the simple bound
\begin{equation}
    \norm{\OSC_{k,n}^t - U}_{\TV} \ge \OSC_{n,k}^t(B_n) - U(B_n).
\end{equation}
It is easy to see that $U(B_n) \le 1/m \rightarrow 0$ as $n\rightarrow \infty$. Thus, it suffices to bound $\OSC_{n,k}^t(B_n)$ appropriately. Let $U_n^t$ be the set of untouched cards in $V_n$ after $t$ iterations of the shuffle. Then
$$\OSC_{k,n}^t(B_n) \ge \PP(|U_n^t|\ge 1)$$
(i.e., the probability that there exists an untouched card in $V_n$). Thus, we have reduced our problem to a variant of coupon collecting.
We begin by modeling what happens in one iteration of the shuffle:
\begin{align}
|V_n\setminus U_n^{t+1}| - |V_n\setminus U_n^t| &= |(\{r^{t+1}\} \cup L^{t+1})\cap U_n^t| \nonumber \\
&\le \mathds{1}[r^{t+1}\in U_n^t] + |L^{t+1}\cap U_n^t| \nonumber \\
&\le \mathds{1}[r^{t+1}\in U_n^t, |L^{t+1}\cap U_n^t| > 0] + \mathds{1}[r^{t+1}\in U_n^t, |L^{t+1}\cap U_n^t| = 0] + |L^{t+1}\cap U_n^t| \nonumber \\
&\le \mathds{1}[|L^{t+1}\cap U_n^t| > 0] + |L^{t+1}\cap U_n^t| + \mathds{1}[r^{t+1}\in U_n^t, |L^{t+1}\cap U_n^t| = 0].\label{eq:change}
\end{align}
Clearly,
\begin{equation}\label{eq:right-prob}
    \PP(r^{t+1}\in U_n^t, |L^{t+1}\cap U_n^t| = 0) \le \PP(r^{t+1}\in U_n^t) = \frac{|U_n^t|}{n}.
\end{equation}
We now focus our attention on the quantity $|L^{t+1}\cap U_n^t|$.
\begin{proposition}\label{prop:left-prob}
Let $s\in [n]$. We have 
$$\PP(|L^{t+1}\cap U_n^t|\ge s) \le  \frac{1}{m}\left(\frac{k|U_n^t|}{n-\frac{n}{m}}\right)^{s}.$$
\end{proposition}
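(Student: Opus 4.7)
The plan is to condition on $r^{t+1}$ and exploit the observation that $L^{t+1}\cap U_n^t$ can only be nonempty when $r^{t+1}$ itself lies in $V_n$. Since $U_n^t\subseteq V_n = \{n-n/m+1,\ldots,n\}$ and every element of $L^{t+1}$ is at most $r^{t+1}$, having any element of $L^{t+1}$ in $U_n^t$ forces $r^{t+1}\ge n-n/m+1$. The probability of this event is exactly $|V_n|/n = 1/m$, which already produces the leading factor $1/m$ in the claimed bound.

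Given $r^{t+1}=r\ge n-n/m+1$, I would model the $k$ samples $L^{t+1}_1,\ldots,L^{t+1}_k$ as i.i.d.\ uniform on $\{1,\ldots,r\}$. Each sample lies in $U_n^t$ with probability $|U_n^t|/r$, and $|L^{t+1}\cap U_n^t|$ (as a set) is at most the count of indices $i$ with $L^{t+1}_i \in U_n^t$. Since $r\mapsto |U_n^t|/r$ is decreasing and $r\ge n-n/m+1 > n-n/m$, the per-sample hit probability is bounded by $|U_n^t|/(n-n/m)$. A union bound over the $\binom{k}{s}$ size-$s$ subsets of $[k]$ (each contributing the probability that all of its samples land in $U_n^t$) then gives
$$\PP\!\left(|L^{t+1}\cap U_n^t|\ge s \,\middle|\, r^{t+1}=r\right) \;\le\; \binom{k}{s}\!\left(\frac{|U_n^t|}{n-\tfrac{n}{m}}\right)^{\!s} \;\le\; \left(\frac{k|U_n^t|}{n-\tfrac{n}{m}}\right)^{\!s},$$
using $\binom{k}{s}\le k^s$ for the final step.

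To finish, I would average over $r$. The conditional probability vanishes when $r<n-n/m+1$, so only the $n/m$ values $r\in V_n$ contribute, each with weight $1/n$. This produces the prefactor $\tfrac{1}{n}\cdot\tfrac{n}{m}=\tfrac{1}{m}$ while leaving the uniform upper bound from the previous display intact, yielding the inequality as stated.

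There is no genuine obstacle here: the whole argument is a conditional union bound wrapped around the observation that the shuffle only touches the top $n/m$ positions with probability $1/m$ per step. The one thing to be careful about is whether $L^{t+1}$ is treated as a tuple or a set; since taking distinct elements only decreases $|L^{t+1}\cap U_n^t|$, the bound by the multiplicity-counted sum $\sum_i \mathds{1}[L^{t+1}_i\in U_n^t]$ is always valid.
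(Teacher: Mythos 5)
Your proposal is correct and follows essentially the same route as the paper: condition on $r^{t+1}$ landing in the top $n/m$ positions (probability exactly $1/m$), bound each of the $k$ i.i.d.\ samples' hit probability by $|U_n^t|/(n-n/m)$, and then apply the binomial/union bound $\binom{k}{s}p^s \le (kp)^s$. The paper phrases this as stochastic domination by a $\mathrm{Bin}(k,\, |U_n^t|/(n-n/m))$ tail, while you spell out the conditioning and averaging over $r$ more explicitly, but the argument is the same.
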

\begin{proof}
If $r_t \le n - \frac{n}{m},$ then $|L^{t+1}\cap U_n^t| = 0.$ Otherwise, we can use the naïve bound $$\PP(l\in U_n^t| l\in L^{t+1}, r^t > n - \frac{n}{m}) \le \frac{|U_n^t|}{n-\frac{n}{m}},$$ which arises by considering the extreme case when $r^t = n$. This gives 
\begin{align*}
    \PP(|L^{t+1}\cap U_n^t|\ge s)
    &\le \PP(r_t > n - \frac{n}{m}) \cdot \PP\left[\text{Bin}\left(k, \frac{|U_n^t|}{n-\frac{n}{m}}\right)\ge s\right]\\
    &\le \frac{1}{m}\left(\frac{k|U_n^t|}{n-\frac{n}{m}}\right)^{s}.
\end{align*}
\end{proof}
We now let $\TT_i$ be the number of integers $t\ge 0$ for which $|V_n \setminus U_n^t|=i$. Thus, setting $\TT := \TT_0 + \TT_1 + \cdots + \TT_{\frac{n}{m}-1}$ to be the time it takes to collect all cards in $V_n$,
\begin{equation}\label{eq:process}
    \PP(|U_n^t| \ge 1) = \PP(\TT > t).
\end{equation}
Let $S_{i,j}$ be the event that we go from having exactly $j$ collected cards straight to having more than $i$ collected cards. Then
\begin{align*}
    \PP(S_{i,j}) \le \frac{\PP(|L^{t+1}\cap U_n^t|\ge i-j)}{\PP(|U_n^{t+1}| < |U_n^{t}|)} \le \frac{ \frac{1}{m}\left(\frac{k|U_n^t|}{n-n/m}\right)^{i-j}}{|U_n^t|/n},
\end{align*}
where we applied \Cref{prop:left-prob} and bounded the denominator from below by $\PP(r^{t+1}\in U_n^t) = \frac{|U_n^t|}{n}$. Thus,
\begin{equation*}
    \PP(\TT_i = 0) \le \sum_{j=i-k}^{i-1}\PP(S_{i,j}) < \sum_{s=1}^k \frac{ \frac{1}{m}\left(\frac{k|U_n^t|}{n-n/m}\right)^{s}}{|U_n^t|/n} \new{= \frac{k}{m-1} \sum_{s=1}^k \left(\frac{k|U_n^t|}{n-\frac{n}{m}}\right)^{s-1}} \nonumber < \frac{2k}{m},
\end{equation*}
where the last inequality follows easily \new{because for $n$ large, $k = \frac{m}{\log n}\le m-1$} and $\frac{k|U_n^t|}{n-n/m}\le \frac{kn/m}{n-n/m} = \frac{1}{(1-1/m)\log n} < \frac{1}{3}$.
Then $\TT_i$ stochastically dominates $\TT_i'$ given by
\begin{align*}
    \PP(\TT_i'=j) = 
    \begin{cases} 
      2k/m ,& j=0 \\
      (1-2k/m)p_i(1-p_i)^{j-1} ,& j\ge 1,
   \end{cases}
\end{align*}
where
\begin{align}
p_i &:= \PP(|U_n^{t+1}| < |U_n^t| : |U_n^t| = \frac{n}{m}-i) \nonumber\\
&\le \PP(|L^{t+1}\cap U_n^t|\ge 1) + \PP(r^{t+1}\in U_n^t) \label{eq:car1}\\
&\le \frac{1}{m}\left(\frac{k(\frac{n}{m}-i)}{n-\frac{n}{m}}\right) + \frac{\frac{n}{m}-i}{n} = \frac{k + m - 1}{(m-1)n}\left(\frac{n}{m} - i\right),\label{eq:car2}
\end{align}
where \eqref{eq:car1} is a consequence of \eqref{eq:change}, and \eqref{eq:car2} follows by applying \Cref{prop:left-prob} and \eqref{eq:right-prob} and then simplifying.

We may now apply Chebyshev's inequality to $\TT' = \TT_0' + \TT_1' + \cdots + \TT_{\frac{n}{m} - 1}'$. Recalling that $m = k\log n,$
\begin{align*}
    \EE[\TT'] = \sum_{i=0}^{\frac{n}{m}-1} \frac{m-2k}{mp_i} &\ge \frac{m-2k}{m}\cdot \frac{m-1}{k+m-1}\cdot n \sum_{i=0}^{\frac{n}{m}-1} \frac{1}{\frac{n}{m} - i}\\
    &\ge \left(1 - \frac{4}{\log n}\right)\cdot \left( n\log \left(\frac{n}{k}\right) - n\log \log n\right) \\ 
    &\ge n\log \left(\frac{n}{k}\right) - n\log \log n - 4n
\end{align*}
and
\begin{equation*}
    \Var[\TT'] \le \sum_{i=0}^{\frac{n}{m}-1} \frac{1}{p_i^2} \le \sum_{i=1}^{\frac{n}{m}} \frac{n^2}{i^2} \le \frac{\pi^2}{6}n^2,
\end{equation*}
where we use that $p_i\ge \frac{n/m - i}{n}.$ Thus, for $t = n\log \left(\frac{n}{k}\right) - n\log \log n - cn,$
$$\PP(\TT' \le t) \le \frac{\pi^2}{6(c-4)^2}.$$
This implies, using \Cref{eq:process}, that
\begin{equation*}
    \PP(|U_n^t| \ge 1) = \PP(\TT > t) \ge 1 - \frac{\pi^2}{6(c-4)^2},
\end{equation*}
completing the proof of \Cref{thm:lower}.

\section{The case where $k$ is big}\label{sec:lk}

In this section, we prove \Cref{thm:large-k}. In particular, if $k = \Omega(n \log n)$, then $ \OSC_{n, k}$ mixes in $O(n)$ steps without cutoff. The proof is simple. We will show that the mixing time is at most order $n$ using a coupling argument.

Let $t_{\textup{rel}}= (1-\beta)^{-1}$, where $\beta$ is the second largest eigenvalue of $\OSC_{n,k}$. Then \Cref{cor:large-eig} gives that $t_{\textup{rel}} = n$. Thus, as a consequence,
$$\lim_{n \rightarrow \infty} \frac{t_{\textup{mix}}(\varepsilon)}{t_{\textup{rel}}} \neq \infty,$$
which implies that in this regime, there is no cutoff (see, e.g., Proposition 18.4 of \cite{Levin2008MarkovCA}).

\begin{proof}[Proof of Theorem \ref{thm:large-k}]
Let $Q$ be the transition matrix of star transpositions. \new{More precisely, define $Q$ such that $Q(x,xs)= \frac{1}{n} $ for every $s\in \{(n \quad i), i \in [n]\}$ and $ x \in S_n$ and $Q(x,y)=0$ otherwise.} Diaconis \cite{PD} proved that there is a universal, positive constant $A$ such that 
\begin{equation*}
\max_{x \in S_n} \Vert Q^t_x- U \Vert_{\TV} \leq Ae^{-c},
\end{equation*}
where $t=n \log n + c n$  with $c>1$.
Combining this with the fact that there is a coupling time $\tau$ such that
\[
\Vert Q^t_x- U \Vert_{\TV} = \Pr(\tau>t)
\]
for every $x \in S_n$ (see Proposition 4.7 of \cite{Levin2008MarkovCA} for a reference), we get that \begin{equation}\label{star}
\Pr(\tau>n \log n +c n) <Ae^{-c},
\end{equation}
if $c>1$.
We define the following coupling time $T$ for the one-sided $k$-transposition shuffle. 

Let $X_t$ and $Y_t$ be two copies of the one-sided $k$-transposition shuffle. We recall that one step of the one-sided transposition shuffle consists of choosing $j\in \{1,\cdots,n\}$, choosing $i_1,\cdots,i_k\in \{1,\cdots,j\},$ and then applying the permutation $(j; i_1,\ldots, i_{k})$. Then a coupling is given as follows: For $1\leq j \leq n-1$ we apply the same permutation $(j; i_1,\ldots, i_{k})$ to both chains. That is
\[X_{t+1}= X_t (j; i_1,\ldots, i_{k}) \mbox{ and }Y_{t+1}= Y_t (j; i_1,\ldots, i_{k}).\]
When $j=n,$ the permutation applied is equivalent to $k$ star transpositions. In this case, we couple $X_t$ and $Y_t$ according to the star transpositions coupling. 

Let $T$ be the first time that $X_t=Y_t$. The standard coupling inequality says
\[d^{(n,k)}(t) \leq \PP \left(T>t\right).\]
Roughly speaking, the coupling progresses whenever $j=n$, so it suffices to consider how often this happens. Let $B$ be a Binomial$(t, 1/n)$ random variable counting the number of times before time $t+1$ that $j=n$. Then,
\[\PP \left(T>t\right)\leq  \Pr\left(T>t , B> \frac{t}{2n}\right) + \Pr\left(B\leq \frac{t}{2n}\right).\]
\new{We consider the case $k = \Omega(n \log n)$. Note that every time that $j=n$, we perform $k$ star transpositions. Hence, for $t=4d n,$ (with $d>\frac{n\log n}{k}$), we have that the event $B>\frac{t}{2n}$ implies that we have applied at least $2dk$ star transpositions. Therefore, 
$$\Pr\left(T>t , B> \frac{t}{2n}\right) \leq \Pr(\tau>2 d  k).$$
Thus, 
\begin{equation}\label{couple}
\Pr(T>t) \leq \Pr(\tau>2 d k )+ e^{-2d} \leq \Pr(\tau>2 n \log n)+ e^{-2d}, 
\end{equation}
where we bounded the tail of the binomial distribution.} Combining equations \eqref{star} and \eqref{couple}, we get that there are positive constants $A,B$ that are universal on $n$ such that 
\[d^{(n,k)}(t) \leq Ae^{-Bd},\]
where $t=4 d  n$. For $k \in [n,n \log n] $ the same argument holds for $t= O \left(\frac{n^2 \log n}{k}\right)$.

We now present the lower bound. Equation (12.15) of \cite{Levin2008MarkovCA} says that for every eigenvalue $\beta\neq 1$ of $P_{n,k}$ we have that
\[|\beta|^t \leq 2 \norm{\OSC_{n,k}^t - U}_{\TV}. \]
Using \Cref{cor:large-eig} we have that 
\[\left(1- \frac{1}{n-1} \right)^t \leq |\eig(T_n)|^t \leq 2 \norm{\OSC_{n,k}^t - U}_{\TV},\]
which shows that $t_{\text{mix}}(\varepsilon)= \Omega(n)$ for every $k$.

\end{proof}

\section*{Appendix: Lifting Eigenvectors}

In this section, we will prove \Cref{thm:eig-syt}, which gives the eigenvalues of $P_{n,k}$. Our approach closely follows that of \cite{raven}, who in turn closely follows the approach of \cite{dieker}. In brief, the strategy is to recursively find the eigenvectors of $\OSC_{n+1,k}$ in terms of those of $\OSC_{n,k}$ by considering the group algebra $\SS_n = \CC[S_n]$ and its representations. We now introduce some background, following the outline of \cite{raven} closely.

Let $[n] = \{1,2,\cdots,n\}$ for $n\in \mathbb{N}.$ Given $n\in \mathbb{N},$ allow $W^n$ to be the set of words $w = w_1\cdot w_2\cdot \ldots \cdot w_n$ of length $n$ with elements $w_i\in [n].$ We let $S_n$ act on $W^n$ via place permutations, i.e., for $\sigma\in S_n,$ $\sigma(w_1\cdot w_2\cdot \ldots \cdot w_n) := w_{\sigma^{-1}(1)}\cdot w_{\sigma^{-1}(2)}\cdot \ldots \cdot w_{\sigma^{-1}(n)}.$ Now let $M^n$ be the vector space over $\CC$ with basis $W^n,$ on which the $S^n-$action we define above extends to.

For $w\in W^n$, let $\eval_i(w)$ be the number of occurrences of $i$ in the word $w$. Then define $\eval(w) := (\eval_1(w),\cdots,\eval_n(w))$ be the \textit{evaluation} of $w$. If $\eval(w)$ is non-increasing, then we identify $\eval(w)$ with a partition $\lambda\vdash n$ where $\lambda_1 = \eval_1(w), \lambda_2 = \eval_2(w), \cdots.$ Furthermore, to any standard Young tableau $T$ of shape $\lambda\vdash n$ we may associate a word $w = w_1\cdot w_2\cdot \ldots \cdot w_n \in W^n,$ where $w_{T(i,j)} = i$ for all boxes $(i,j)$ in $T$. 
\new{There is at most one standard Young tableau associated with each word.}

\begin{definition}
Given $\lambda\vdash n,$ we can associate to it a simple module $S^\lambda$ of $\SS_n$ called the Specht module for $\lambda$. It has dimension $d_\lambda := |\SYT(T)|$.
\end{definition}

\begin{definition}
Given $\lambda\vdash n$, define $M^\lambda$ to be the span of $\{w\in W^n: \eval(w) = \lambda\}.$ This is clearly a $\SS_n$-submodule of $M^n$.
\end{definition}

We are now ready to see how this relates to card shuffles. Let $(1^n):=(1,\cdots,1)$ denote the partition of all ones. Then $M^{(1^n)}$ is spanned by the $n!$ permutations of the word $1\cdot 2\cdot \ldots \cdot n\in W^n$; thus, card shuffles can be studied as linear operators on $M^{(1^n)}$.

Indeed, consider the one-sided $k-$transposition shuffle on $n$ cards as the following element of the group algebra $\SS_n$:
$$\sum_{1\le j\le n}\sum_{1\le i_1,\cdots,i_k\le j} P_{n,k}((j;i_1,\cdots,i_k)) (j;i_1,\cdots,i_k) = \sum_{1\le j\le n}\sum_{1\le i_1,\cdots,i_k\le j} \frac{1}{nj^k} (j;i_1,\cdots,i_k).$$
To simplify our calculations, we scale this operator by $n$ to get the operator
$$Q_{n,k} := \sum_{1\le j\le n}\sum_{1\le i_1,\cdots,i_k\le j} \frac{1}{j^k}(j;i_1,\cdots,i_k).$$
We seek to determine the eigenvalues of $Q_{n,k}$ on $M^{(1^n)}.$ The following standard results indicate that it suffices to find the eigenvalues of $Q_{n,k}$ on $S^\lambda$, and that we may study the action of $Q_{n,k}$ on $S^\lambda$ within the module $M^\lambda.$
\begin{lemma}
Given $\lambda\vdash n,$
$$M^\lambda \cong \bigoplus_{\mu \trianglerighteq \lambda} K_{\lambda,\mu} S^\mu,$$ where $K_{\lambda,\lambda} = 1$ and $K_{\lambda,\mu}$ are the Kostka numbers. In particular,
$$M^{(1^n)} \cong \bigoplus_{\lambda\vdash n} d_\lambda S^{\lambda}.$$
Recall that for partitions $\lambda,\mu\vdash n,$ we write $\lambda\trianglerighteq \mu$ if $\mu$ can be obtained by moving boxes in $\lambda$ down and to the left.
\end{lemma}

The following key operators will allow us to connect $Q_{n+1,k}$ with $Q_{n,k}.$

\begin{definition}
For $a\in[n+1],$ define the \textbf{adding operator} $\Phi_a:M^n\rightarrow M^{n+1}$ so that for $w\in W^n$,
$$\Phi_a(w) := w\cdot a.$$
In other words, $\Phi_a$ appends the symbol $a$ to the end of the word $w$.

Furthermore, for $a,b\in [n],$ define the \textbf{switching operator} $\Theta_{b,a}:M^n\rightarrow M^n$ so that for $w = w_1\cdot w_2\cdot \ldots \cdot w_n\in W^n,$
$$\Theta_{b,a}(w) := \sum_{\substack{1\le i\le n\\ w_i=b}} w_1\cdot \ldots \cdot w_{i-1}\cdot a\cdot w_{i+1}\cdot \ldots \cdot w_n.$$
In other words, $\Theta_{b,a}$ sums all words formed by replacing an occurrence of the symbol $a$ in $w$ with the symbol $b$.
\end{definition}

The operators defined above behave nicely when restricted to the modules $M^{\lambda}.$ The following definition will be useful in this case.
\begin{definition}
Given an $n-$tuple $\lambda=(\lambda_1,\cdots,\lambda_n)$ of non-negative integers summing to $n$, we define $\lambda + e_a$ to be the $(n+1)-$tuple $(\lambda_1,\cdots,\lambda_n,0) + \underbrace{(0,\cdots,0,1)}_{a}$ of non-negative integers summing to $n+1.$
\end{definition}

\begin{lemma}[Lemma 31 in \cite{raven}]
For $a\in [n+1]$ and an $n-$tuple $\lambda$ of non-negative integers summing to $n$, $$\Phi_a:M^{\lambda}\rightarrow M^{\lambda+e_a},$$
and that for $a,b\in [n],$ and $n-$tuples $\lambda,\mu$ of non-negative integers summing to $n$ where $\lambda+e_a = \mu+e_b,$
$$\Phi_{b,a}: M^{\lambda}\rightarrow M^{\mu}.$$
\end{lemma}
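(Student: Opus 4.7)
The plan is to prove both claims by direct computation from the definitions. Since $M^\lambda$ is spanned by those words $w \in W^n$ with $\eval(w) = \lambda$, it suffices, by linearity, to verify in each case that a single such basis word is sent into the claimed target module.

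For the first claim, I would fix $w \in W^n$ with $\eval(w) = \lambda$ and consider $\Phi_a(w) = w \cdot a \in W^{n+1}$. Appending the letter $a$ to $w$ leaves every $\eval_i$ unchanged except $\eval_a$, which increases by one. Reading the evaluation as an $(n+1)$-tuple with respect to the alphabet $[n+1]$, this gives $\eval(\Phi_a(w)) = \lambda + e_a$, so $\Phi_a(w) \in M^{\lambda+e_a}$ as claimed.

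For the second claim, I would again fix $w \in W^n$ with $\eval(w) = \lambda$ and examine each summand of $\Theta_{b,a}(w)$. By the definition of the switching operator, each such summand is obtained from $w$ by replacing one occurrence of $b$ with $a$, which decreases $\eval_b$ by one and increases $\eval_a$ by one. Every summand therefore has evaluation $\lambda - e_b + e_a$, which by the hypothesis $\lambda + e_a = \mu + e_b$ equals $\mu$. Hence $\Theta_{b,a}(w) \in M^\mu$.

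I expect no real obstacle here; the lemma is a straightforward bookkeeping check of the sort used to initialize the lifting-eigenvector machinery of Dieker and Saliola. The only mild subtlety is confirming that $\lambda + e_a$ and $\mu + e_b$ should be compared as $(n+1)$-tuples (with zeros appended to $\lambda$ and $\mu$), but since $a, b \in [n]$ the $(n+1)$-st coordinate is zero on both sides and the identity reduces to the expected equation among $n$-tuples.
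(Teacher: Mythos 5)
Your proof is correct and is exactly the routine bookkeeping argument that this lemma calls for (the paper itself does not prove it, but cites it as Lemma 31 of Bate--Connor--Matheau-Raven). Two small remarks: the statement's ``$\Phi_{b,a}$'' is a typo for the switching operator $\Theta_{b,a}$, which you correctly read through; and note that the paper's parenthetical gloss of $\Theta_{b,a}$ (``replacing an occurrence of $a$ with $b$'') contradicts its displayed formula, which replaces an occurrence of $b$ with $a$ --- you followed the formula, which is the reading that makes the lemma true.
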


The following group algebra element will be important to understand for the analysis.
\begin{definition} Let $\textup{Star}_n \in \SS_n$ to be the element 
$$\textup{Star}_n = \sum_{i=1}^n (i,n)$$
\end{definition}

We are now ready to show a key result illustrating the recursive structure of $Q_{n,k}.$
\begin{proposition}
\begin{equation}
    Q_{n+1,k}\circ \Phi_a - \Phi_a\circ Q_{n,k} = \frac{1}{(n+1)^k}\textup{Star}_{n+1}^k \circ \Phi_a
\end{equation}
\end{proposition}

\begin{proof}
Begin by observing that for $w\in W^n,$ we can expand $(Q_{n+1,k}\circ \Phi_a)(w)$ as
\begin{align*}
    &\,\sum_{1\le i_1,\cdots,i_k\le n+1} \frac{1}{(n+1)^k}(n+1;i_1,\cdots,i_k)(w\cdot a) + \sum_{1\le j\le n}\sum_{1\le i_1,\cdots,i_k\le j} \frac{1}{j^k}(j;i_1,\cdots,i_k)(w\cdot a)\\
    =& \sum_{1\le i_1,\cdots,i_k\le n+1} \frac{1}{(n+1)^k}(n+1;i_1,\cdots,i_k)(w\cdot a) + \sum_{1\le j\le n}\sum_{1\le i_1,\cdots,i_k\le j} \frac{1}{j^k}\Phi_a((j;i_1,\cdots,i_k)(w))\\
    =& \sum_{1\le i_1,\cdots,i_k\le n+1} \frac{1}{(n+1)^k}(n+1;i_1,\cdots,i_k)(w\cdot a) + (\Phi_a\circ Q_{n,k})(w),
\end{align*}
where the primary observation is that we may freely interchange the order of adding a card at the $(n+1)$-th position and permuting the first $n$ cards. Thus,
\begin{equation*}
    (Q_{n+1,k}\circ \Phi_a - \Phi_a\circ Q_{n,k})(w) = \frac{1}{(n+1)^k} \sum_{1\le i_1,\cdots,i_k\le n+1} (n+1;i_1,\cdots,i_k)(w\cdot a), 
\end{equation*}
Since we have the following identity,
\begin{equation*}
   \sum_{1\le i_1,\cdots,i_k\le n+1} (n+1;i_1,\cdots,i_k) = \left(\sum_{1 \le i \le n} (i, n+1)\right)^k = \textup{Star}_{n+1}^k,
\end{equation*}
the desired equation follows.
\end{proof}

\begin{corollary}
\begin{equation}\label{eq:slambda-restriction}
    (Q_{n+1,k}\circ \Phi_a - \Phi_a\circ Q_{n,k})\vert_{S^\lambda} = \frac{1}{(n+1)^k} \textup{Star}_{n+1}^k \circ \Phi_a\vert_{S^\lambda}
\end{equation}
\end{corollary}

The final key component of our proof involves defining the lifting operators, which map eigenvectors of $Q_{n,k}$ to eigenvectors of $Q_{n+1,k}.$ First, we give a useful lemma characterizing the image of the adding operators $\Phi_a.$
\begin{lemma}[Lemma 36 in \cite{raven}]\label{lem:phi-image}
Consider $\lambda\vdash n$ and $\lambda+e_a\vdash n+1$. Then $\Phi_a(S^\lambda)$ is contained in an $\SS_{n+1}-$submodule of $M^{\lambda+e_a}$ isomorphic to $\bigoplus_\mu S^\mu$, where the sum ranges over partitions $\mu$ that can be obtained from $\lambda$ by adding a box in row $i$ for $i\le a.$
\end{lemma}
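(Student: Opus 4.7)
The plan is to combine the branching rule for Specht modules with Young's rule, bounding the irreducible constituents of the $\SS_{n+1}$-submodule $N:=\SS_{n+1}\cdot\Phi_a(S^\lambda)\subset M^{\lambda+e_a}$ from two directions and then, if necessary, enlarging $N$ to the required submodule.

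First I would observe that although $\Phi_a$ is not $\SS_{n+1}$-equivariant, it is $\SS_n$-equivariant, where $\SS_n\hookrightarrow \SS_{n+1}$ denotes the subgroup fixing position $n+1$: place-permuting the first $n$ letters of a word commutes with appending the symbol $a$ at the last position. Hence $\Phi_a(S^\lambda)\subset M^{\lambda+e_a}$ is an $\SS_n$-submodule isomorphic to $S^\lambda$, and $N$ is generated as an $\SS_{n+1}$-module by this copy of $S^\lambda$.

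By the universal property of induction, there is an $\SS_{n+1}$-equivariant surjection $\mathrm{Ind}_{\SS_n}^{\SS_{n+1}} S^\lambda \twoheadrightarrow N$. The standard branching rule gives $\mathrm{Ind}_{\SS_n}^{\SS_{n+1}} S^\lambda \cong \bigoplus_\nu S^\nu$, the sum running over partitions $\nu\vdash n+1$ obtained from $\lambda$ by adding a single box (each with multiplicity one). Semisimplicity then forces $N\cong \bigoplus_\nu (S^\nu)^{c_\nu}$ with $c_\nu\in\{0,1\}$, and $c_\nu=0$ unless $\nu$ is obtained from $\lambda$ by adding a box.

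Simultaneously, Young's rule applied to $M^{\lambda+e_a}$ tells us that $S^\mu$ appears in $M^{\lambda+e_a}$ only when $\mu\trianglerighteq\mathrm{sort}(\lambda+e_a)$. A short partial-sum check shows that if $\mu$ is obtained from $\lambda$ by adding a box to row $i$, then $\mu\trianglerighteq \mathrm{sort}(\lambda+e_a)$ precisely when $i\le a$---moving the extra box to a strictly higher row strictly raises dominance. Combining the two constraints, the irreducible constituents of $N$ lie in $\{S^\mu:\mu\text{ is obtained from }\lambda\text{ by adding a box in row }i\le a\}$, each appearing at most once. Since every such $\mu$ appears in $M^{\lambda+e_a}$ with Kostka multiplicity at least one (again by Young's rule), we may adjoin one copy of $S^\mu$ from the $S^\mu$-isotypic component of $M^{\lambda+e_a}$ for each missing $\mu$, producing a submodule of $M^{\lambda+e_a}$ of the required form and containing $\Phi_a(S^\lambda)$.

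I expect the main obstacle to be the careful use of Frobenius reciprocity: one must verify that the surjection $\mathrm{Ind}_{\SS_n}^{\SS_{n+1}} S^\lambda \twoheadrightarrow N$ is well-defined (which follows from the universal property applied to the $\SS_n$-embedding $\Phi_a(S^\lambda)\hookrightarrow N$) and that it reproduces the branching multiplicities correctly in the semisimple category. Once this is set up, the remaining partial-sum comparison establishing the equivalence ``box in row $i\le a$'' $\iff$ ``$\mu\trianglerighteq\mathrm{sort}(\lambda+e_a)$'' is a routine exercise in dominance order.
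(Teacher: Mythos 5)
The paper does not reproduce a proof of this lemma; it cites Lemma 36 of \cite{raven}, so there is no in-text argument to compare against. Your argument is correct: the universal property of induction together with the branching rule pins the irreducible constituents of $\SS_{n+1}\cdot\Phi_a(S^\lambda)$ to Specht modules $S^\nu$ with $\nu=\lambda+\text{(one box)}$ and multiplicity at most one, Young's rule further forces $\nu\trianglerighteq\lambda+e_a$, and the partial-sum comparison correctly identifies these $\nu$ as exactly those obtained by adding a box in row $i\le a$; the final enlargement step is legitimate by semisimplicity since $K_{\mu,\lambda+e_a}\ge 1$ for each $\mu$ in the target list. This is the standard route and presumably matches \cite{raven}.
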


\begin{definition}
We will define $\pi^\mu:V\rightarrow V$ to be the \textbf{isotypic projection} that projects onto the $S^\mu-$component of $V$. Furthermore, for $\lambda\vdash n, \mu\vdash n+1$, define the operators
$$\kappa_b^{\lambda,\mu} := \pi^\mu \circ \Phi_b: S^\lambda \rightarrow \Phi_b(S^\lambda).$$
As a particular case, define the \textbf{lifting operators}
\begin{equation*}
    \kappa_a^{\lambda,\lambda+e_a}: S^\lambda \rightarrow S^{\lambda+e_a},
\end{equation*}
where the image of $\kappa_a^{\lambda,\lambda+e_a}$ is clear because $\Phi_a(\lambda)$ has a unique $S^{\lambda+e_a}$ component (\Cref{lem:phi-image}).
\end{definition}

We are particularly interested in the lifting operators $\kappa_a^{\lambda,\lambda+e_a},$ as these will allow us to ``lift'' the eigenvectors of $Q_{n,k}$ to $Q_{n+1,k}.$ A key result is that these operators are injective, and thus do not lose any eigenvectors.
\begin{lemma}[Lemma 48 in \cite{raven}]\label{lem:kappa-injective}
Consider $\lambda\vdash n$ where $\lambda+e_a\vdash n+1$. Then the linear operator $\kappa_a^{\lambda,\lambda+e_a}$ is an injective $\SS_n-$module morphism.
\end{lemma}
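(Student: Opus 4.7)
The plan is to prove the lemma in three steps: verify that $\kappa_a^{\lambda,\lambda+e_a}$ is $\SS_n$-equivariant, reduce injectivity to nonvanishing via Schur's lemma, and produce a polytabloid in $S^\lambda$ whose image is nonzero.

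For the morphism property, first observe that $\Phi_a$ appends the symbol $a$ at position $n+1$, while $\SS_n$, embedded in $\SS_{n+1}$ as the pointwise stabilizer of position $n+1$, acts only on the first $n$ positions of a word. Hence $\Phi_a$ commutes with the $\SS_n$-action and defines an $\SS_n$-module map $M^\lambda \to M^{\lambda+e_a}$. The isotypic projection $\pi^{\lambda+e_a}$ is the projection onto the $S^{\lambda+e_a}$-isotypic direct summand of $M^{\lambda+e_a}$; projection onto an isotypic component of a semisimple module commutes with the group action, so $\pi^{\lambda+e_a}$ is $\SS_{n+1}$-equivariant, and in particular $\SS_n$-equivariant. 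Consequently $\kappa_a^{\lambda,\lambda+e_a}$ is an $\SS_n$-morphism.

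Since $S^\lambda$ is simple as an $\SS_n$-module, $\ker \kappa_a^{\lambda,\lambda+e_a}$ is either $0$ or all of $S^\lambda$, so injectivity is equivalent to the map being nonzero. Establishing this nonvanishing is the main obstacle.

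To show nonvanishing, I would exhibit an explicit polytabloid with nonzero image. Pick any standard Young tableau $t$ of shape $\lambda$ and let $t'$ be the standard Young tableau of shape $\lambda+e_a$ obtained by placing $n+1$ in the new box in row $a$. Let $e_t := \sum_{\sigma \in C_t} \operatorname{sgn}(\sigma)\,\sigma\cdot w_t$ denote the polytabloid, where $C_t\subset \SS_n$ is the column stabilizer and $w_t\in W^n$ is the word associated to $t$; then $e_t$ is a nonzero element of $S^\lambda$. Since $\Phi_a$ commutes with $\SS_n$ and sends $w_t$ to $w_{t'}$, we have $\Phi_a(e_t) = \sum_{\sigma \in C_t}\operatorname{sgn}(\sigma)\,\sigma\cdot w_{t'}$. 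The columns of $t'$ coincide with those of $t$ apart from a single column that gains the extra entry $n+1$, so $C_t \subseteq C_{t'}$. Choosing coset representatives $\rho_1,\dots,\rho_h$ for $C_t$ in $C_{t'}$ and setting $A := \sum_i \operatorname{sgn}(\rho_i)\,\rho_i \in \CC[\SS_{n+1}]$ yields the identity $e_{t'} = A\cdot \Phi_a(e_t)$. Applying $\pi^{\lambda+e_a}$, which fixes $e_{t'} \in S^{\lambda+e_a}$ and commutes with the $\SS_{n+1}$-action, gives $e_{t'} = A\cdot \kappa_a^{\lambda,\lambda+e_a}(e_t)$. Since $e_{t'}\neq 0$, this forces $\kappa_a^{\lambda,\lambda+e_a}(e_t)\neq 0$, completing the proof.
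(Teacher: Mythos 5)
Your proof is correct. Note that the paper itself does not prove this lemma---it is quoted verbatim as Lemma 39 of \cite{raven}---so there is no internal proof to compare against; but your argument is complete and sound. The three-step structure is the natural one: $\Phi_a$ commutes with $\SS_n$ (viewed inside $\SS_{n+1}$ as the stabilizer of position $n+1$) and $\pi^{\lambda+e_a}$ is $\SS_{n+1}$-equivariant, so $\kappa_a^{\lambda,\lambda+e_a}$ is an $\SS_n$-morphism; simplicity of $S^\lambda$ reduces injectivity to nonvanishing; and the polytabloid computation nails nonvanishing. I checked the key identities: in the word model $\Phi_a(w_t)=w_{t'}$, equivariance gives $\Phi_a(e_t)=\sum_{\sigma\in C_t}\operatorname{sgn}(\sigma)\sigma\cdot w_{t'}$, the inclusion $C_t\subseteq C_{t'}$ holds because every $\sigma\in C_t\subset\SS_n$ fixes $n+1$ and the columns of $t'$ agree with those of $t$ except that one gains the entry $n+1$, and the coset factorization $\sum_{\tau\in C_{t'}}\operatorname{sgn}(\tau)\tau = \bigl(\sum_i\operatorname{sgn}(\rho_i)\rho_i\bigr)\bigl(\sum_{\sigma\in C_t}\operatorname{sgn}(\sigma)\sigma\bigr)$ gives $e_{t'}=A\cdot\Phi_a(e_t)$. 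Since $K_{\lambda+e_a,\lambda+e_a}=1$, the $S^{\lambda+e_a}$-isotypic component of $M^{\lambda+e_a}$ is exactly the span of the polytabloids, so $\pi^{\lambda+e_a}$ fixes $e_{t'}$; applying it and using $\SS_{n+1}$-equivariance yields $e_{t'}=A\cdot\kappa_a^{\lambda,\lambda+e_a}(e_t)$, and $e_{t'}\neq 0$ forces $\kappa_a^{\lambda,\lambda+e_a}(e_t)\neq 0$. No gaps.
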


We follow the work of \cite{raven} to find the eigenvalues of $\text{Star}_{n+1} \circ \kappa_a^{\lambda,\lambda+e_a}$. With this, we can find the eigenvalues of the lifted eigenvectors of $Q_{n+1,k}$.

\begin{lemma}[Lemma 48 of \cite{raven}] For $\lambda\vdash n, a\in[l(\lambda)+1]$, $i \in [a]$, and $\lambda+e_i\vdash n+1,$
$$\textup{Star}_{n+1} \circ \kappa_a^{\lambda,\lambda + e_i} = (2 + \lambda_a - a) \kappa_a^{\lambda,\lambda+e_i} + \sum_{i \le b \le a}\Theta_{b,a} \circ \kappa_b^{\lambda,\lambda+e_i}$$ \end{lemma}

\begin{theorem}\label{thm:recursion}[Theorem 41 of \cite{raven}]
For $\lambda\vdash n, a\in[l(\lambda)+1]$, $i \in [a]$, and $\lambda+e_a\vdash n+1,$
\begin{equation}\label{eq:recursion}Q_{n+1,k}\circ \kappa_a^{\lambda,\lambda+e_i} - \kappa_a^{\lambda,\lambda+e_i}\circ Q_{n,k} =\left(\frac{2 + \lambda_i - i}{n+1}\right)^k\kappa_a^{\lambda,\lambda+e_i},\end{equation}

In particular, if $v\in S^\lambda$ is an eigenvector of $Q_{n,k}$ with eigenvalue $\epsilon$, then $\kappa_a^{\lambda,\lambda+e_a}(v)$ is an eigenvector of $Q_{n+1,k}$ with eigenvalue
$$\epsilon + \left(\frac{2 + \lambda_i - i}{n+1}\right)^k.$$
\end{theorem}

\begin{proof}
Let $\mu = \lambda + e_a$. Observe that
$$\pi^{\mu}\circ (Q_{n+1,k}\circ \Phi_a - \Phi_a\circ Q_{n,k})\vert_{S^\lambda} = Q_{n+1,k}\circ \kappa_a^{\lambda,\mu} - \kappa_a^{\lambda,\mu}\circ Q_{n,k},$$
as $\pi^\lambda$ (an $\SS_{n+1}-$module morphism) commutes with $Q_{n+1,k}.$ For similar reasons, applying $\pi^\mu$ to the other side of (\ref{eq:slambda-restriction}) gives
$$\pi^\mu \circ \textup{Star}_{n+1} \circ \Phi_a\vert_{S^\lambda} = \textup{Star}_{n+1} \circ \kappa_a^{\lambda,\mu}.$$ 
The previous lemma implies the theorem for $i = a$. Apply the switching operator $\Theta_{i,a}$ to the following equation,
$$\text{Star}_{n+1} \circ \kappa_i^{\lambda,\mu} = (2+\lambda_i -i) \kappa_i^{\lambda,\mu}.$$
Since $\Theta_{i,a}$ is a module morphism, the left hand side becomes
\begin{equation*} \begin{split} \text{Star}_{n+1} \circ \pi^\mu \circ \Theta_{i,a} \circ \Phi_i &= \text{Star}_{n+1} \circ \pi^\mu \circ (\Phi_i \circ \Theta_{i,a} - \Phi_a) \\
&= \text{Star}_{n+1} \circ \kappa_i^{\lambda,\mu} \circ \Theta_{i,a} - \text{Star}_{n+1} \circ \kappa_a^{\lambda,\mu}.
\end{split}\end{equation*}
Writing the right hand side in a similar manner,
\begin{equation*} \begin{split} (2+\lambda_i -i) \pi^\mu \circ \Theta_{i,a} \circ \Phi_i &= (2 + \lambda_i - i)\pi^\mu \circ \Phi_i \circ \Theta_{i,a} - (2 + \lambda-i - i) \pi^\mu \circ \Phi_a\\
& = (2 + \lambda_i - i)\kappa_i^{\lambda,\mu} \circ \Theta_{i,a} - (2 - \lambda_i - i) \kappa_a^{\lambda,\mu}.\end{split}\end{equation*}
Equation \ref{eq:recursion} follows from comparing the two sides. \par
By applying what we have shown inductively,
\begin{equation*}\begin{split}Q_{n+1,k}\circ \kappa_a^{\lambda,\mu} - \kappa_a^{\lambda,\mu}\circ Q_{n,k} &= \textup{Star}^k_{n+1} \circ \kappa_a^{\lambda,\mu} \\
&= (2 + \lambda_i - i)\textup{Star}^{k-1}_{n+1} \circ \kappa_a^{\lambda,\mu}\\
&= (2+ \lambda_i - i)^k \kappa_a^{\lambda,\mu}.\end{split}\end{equation*}
\end{proof}

\textsc{Proof of \Cref{thm:eig-syt}.} We now prove \Cref{thm:eig-syt}. We do this by explicitly finding the eigenvalues of $Q_{n,k},$ indexed by the standard Young tableaux of size $n$.

We now show how the eigenvalues of $Q_{n+1,k}$ are obtained from those of $Q_{n,k}$. For $\mu\vdash n+1,$ the branching rules of $S_n$ tell us that
$$\Res_{\SS_n}^{\SS_{n+1}}(S^\mu) = \bigoplus_{\substack{\lambda\vdash n\\\lambda\subset \mu}} S^{\lambda}.$$ 
Now for any $\lambda\vdash n$ such that $\lambda\subset \mu,$ there is some $a$ for which $\lambda + e_a = \mu.$ From \Cref{lem:kappa-injective}, $\kappa_a^{\lambda,\lambda+e_a}$ ``lifts'' a basis of eigenvectors of $Q_{n,k}$ to a basis of eigenvectors of $Q_{n+1,k}$ in $\Res_{\SS_n}^{\SS_{n+1}}(S^\mu)$. As $\Res_{\SS_n}^{\SS_{n+1}}(S^\mu)$ is equal to $S^{\mu}$ as a vector space, we can find a basis of $S^{\mu}$ by lifting eigenvectors of $Q_{n,k}$ for all $\lambda\vdash n$ such that $\lambda\subset \mu.$

This shows how to recursively construct the eigenvalues of $Q_{n,k}.$ First observe that $Q_{1,k}$ has the single eigenvalue $1$ corresponding to $S^{(1)},$ which is of dimension $1$. Then for $\lambda\vdash n,$ each eigenvalue of $Q_{n,k}$ in $S^\lambda$ corresponds to a sequence of partitions $\emptyset = \lambda^{(0)}, \lambda^{(1)}, \cdots, \lambda^{(n)} = \lambda,$ where $\lambda^{(i+1)}$ is obtained from $\lambda^{(i)}$ by adding one box to row $a^{(i)}.$ As we are working with standard tableau, the entry in the box $\lambda^{i+1} \setminus \lambda^i$ must be $n+1$, with coordinate $(i,\lambda_i+1)$. The resulting eigenvalue, by repeated application of \Cref{thm:recursion}, is equal to
\begin{equation}\label{eq:eig-Q}\sum_{(i,j) \in T} \left(\frac{1 + j - i}{T(i,j)}\right)^k.\end{equation}
Moreover, any such sequence of partitions corresponds uniquely to a standard Young tableau $T$ of shape $\lambda$.  Recalling that $M^{(1^n)} \cong \bigoplus_{\lambda\vdash n} d_\lambda S^{\lambda},$ we have that each standard Young tableau $T$ of size $n$ indexes an eigenvalue of $Q_{n,k}$ of multiplicity $d_\lambda$, proving the second assertion of \Cref{thm:eig-syt}. Finally, the first assertion follows from \eqref{eq:eig-Q} since the eigenvalues of $P_{n,k}$ are exactly $\frac{1}{n}$ times the eigenvalues of $Q_{n,k}$.

\bibliographystyle{unsrt}
\bibliography{bib}

\begin{thebibliography}{10}

\bibitem{dieker}
A.B. Dieker and F.V. Saliola.
\newblock Spectral analysis of random-to-random markov chains.
\newblock {\em Advances in Mathematics}, 323:427--485, 2018.

\bibitem{Diaconis1981GeneratingAR}
Persi Diaconis and Mehrdad Shahshahani.
\newblock Generating a random permutation with random transpositions.
\newblock {\em Zeitschrift f{\"u}r Wahrscheinlichkeitstheorie und Verwandte Gebiete}, 57:159--179, 1981.

\bibitem{BH}
Bob Hough.
\newblock The random {$k$} cycle walk on the symmetric group.
\newblock {\em Probab. Theory Related Fields}, 165(1-2):447--482, 2016.

\bibitem{BJ}
Bob Hough and Yunjiang Jiang.
\newblock Cut-off phenomenon in the uniform plane {K}ac walk.
\newblock {\em Ann. Probab.}, 45(4):2248--2308, 2017.

\bibitem{Hild}
Martin Hildebrand.
\newblock Generating random elements in {${\rm SL}_n({\bf F}_q)$} by random transvections.
\newblock {\em J. Algebraic Combin.}, 1(2):133--150, 1992.

\bibitem{NN}
Evita Nestoridi and Oanh Nguyen.
\newblock On the mixing time of the {D}iaconis-{G}angolli random walk on contingency tables over {$\Bbb{Z}/q\Bbb{Z}$}.
\newblock {\em Ann. Inst. Henri Poincar\'{e} Probab. Stat.}, 56(2):983--1001, 2020.

\bibitem{Ros}
Jeffrey~S. Rosenthal.
\newblock Random rotations: characters and random walks on {${\rm SO}(N)$}.
\newblock {\em Ann. Probab.}, 22(1):398--423, 1994.

\bibitem{FOW}
L.~Flatto, A.~M. Odlyzko, and D.~B. Wales.
\newblock Random shuffles and group representations.
\newblock {\em Ann. Probab.}, 13(1):154--178, 1985.

\bibitem{PD}
Persi Diaconis.
\newblock Applications of noncommutative {F}ourier analysis to probability problems.
\newblock In {\em \'{E}cole d'\'{E}t\'{e} de {P}robabilit\'{e}s de {S}aint-{F}lour {XV}--{XVII}, 1985--87}, volume 1362 of {\em Lecture Notes in Math.}, pages 51--100. Springer, Berlin, 1988.

\bibitem{BN}
Megan Bernstein and Evita Nestoridi.
\newblock Cutoff for random to random card shuffle.
\newblock {\em Ann. Probab.}, 47(5):3303--3320, 2019.

\bibitem{Subag}
Eliran Subag.
\newblock A lower bound for the mixing time of the random-to-random insertions shuffle.
\newblock {\em Electron. J. Probab.}, 18:no. 20, 20, 2013.

\bibitem{raven}
Michael~E. Bate, Stephen~B. Connor, and Oliver Matheau-Raven.
\newblock Cutoff for a one-sided transposition shuffle.
\newblock {\em Ann. Appl. Probab.}, 31(4):1746--1773, 2021.

\bibitem{JS}
Justin Salez.
\newblock Cutoff for non-negatively curved {M}arkov chains.
\newblock {\em arXiv:2102.05597}, 2021.

\bibitem{BSZ}
Nathana\"{e}l Berestycki, Oded Schramm, and Ofer Zeitouni.
\newblock Mixing times for random {$k$}-cycles and coalescence-fragmentation chains.
\newblock {\em Ann. Probab.}, 39(5):1815--1843, 2011.

\bibitem{BD}
Dave Bayer and Persi Diaconis.
\newblock Trailing the dovetail shuffle to its lair.
\newblock {\em Ann. Appl. Probab.}, 2(2):294--313, 1992.

\bibitem{BuN}
Alexey Bufetov and Peter Nejjar.
\newblock Cutoff profile of {ASEP} on a segment.
\newblock {\em arXiv:2012.14924}, 2020.

\bibitem{LP}
Eyal Lubetzky and Yuval Peres.
\newblock Cutoff on all {R}amanujan graphs.
\newblock {\em Geom. Funct. Anal.}, 26(4):1190--1216, 2016.

\bibitem{EN}
Evita Nestoridi.
\newblock The limit profile for star transpositions.
\newblock {\em arXiv:2111.03622}, 2021.

\bibitem{NS}
Evita Nestoridi and Sam Olesker-Taylor.
\newblock Limit profiles for reversible {M}arkov chains.
\newblock {\em Probability Theorey and Related fields}, to appear.

\bibitem{Teyssier}
Lucas Teyssier.
\newblock Limit profile for random transpositions.
\newblock {\em Ann. Probab.}, 48(5):2323--2343, 2020.

\bibitem{Levin2008MarkovCA}
David~A. Levin, Yuval Peres, and Elizabeth~L. Wilmer.
\newblock Markov chains and mixing times.
\newblock 2008.

\end{thebibliography}

\end{document}